\documentclass[smallextended]{svjour3}

\RequirePackage{fix-cm}

\smartqed

\usepackage{amsfonts}
\usepackage{amsmath}
\usepackage{mathtools}
\usepackage{tikz}

\usetikzlibrary{math, calc}

\DeclareMathOperator{\tr}{tr}
\DeclareMathOperator{\re}{Re}
\DeclareMathOperator{\Span}{span}
\DeclareMathOperator{\id}{id}
\DeclareMathOperator{\eucl}{Eucl}
\DeclareMathOperator{\ext}{ext}

\newcommand{\dens}{{\mathcal{D}(n)}}
\newcommand{\densz}{{\mathcal{D}_0(n)}}
\newcommand{\densp}{{\mathcal{D}_\textbf{p}(n)}}
\newcommand{\herm}{\mathbb{H}(n)}
\newcommand{\pherm}{{\mathbb{P}(n)}}
\newcommand{\BW}[3]{\mathfrak{g}^{BW}_{#3}({#1},{#2})}
\newcommand{\BWh}{\mathfrak{g}^{BW}}
\newcommand{\sgl}{{\mathbb{S}_{\mathrm{GL}(n)}}}
\newcommand{\bwd}[1]{d^{BW}_{#1}}

\newcommand{\Euclh}{\bar{\mathfrak{g}}}
\newcommand{\Eucl}[3]{\bar{\mathfrak{g}}_{#3}({#1},{#2})}
\newcommand{\Fis}[3]{\mathfrak{g}^F_{#3}({#1},{#2})}
\newcommand{\Fish}{\mathfrak{g}^F}

\newcommand{\deucl}{\bar{d}}
\newcommand{\SLD}[3]{\mathfrak{g}^{SLD}_{#3}({#1},{#2})}
\newcommand{\SLDh}{\mathfrak{g}^{SLD}}
\newcommand{\MO}{\mathcal{M}_+(\Omega)}
\newcommand{\PO}{\mathcal{P}_+(\Omega)}
\newcommand{\Cnn}{\mathbb{C}^{n \times n}}
\newcommand{\cM}{\mathcal{M}}
\newcommand{\cN}{\mathcal{N}}

\title{Bures-Wasserstein geometry for positive-definite Hermitian matrices and their trace-one subset\thanks{The author wishes to thank Giovanni Pistone for supervising this research project and acknowledges the support of the Collegio Carlo Alberto and the Prins Bernhard Cultuurfonds. This work was presented at the Information Geometry seminar in Turin, December 2019.}}
\author{Jesse van Oostrum}

\institute{Jesse van Oostrum \at
              Hamburg University of Technology, Am Schwarzenberg-Campus 1, 21073 Hamburg, Germany\\
              \email{jesse.van@tuhh.de}           
}

\date{Received: date / Accepted: date}

\begin{document}

\maketitle

\begin{abstract}
    In his classical argument, Rao derives the Riemannian distance corresponding to the Fisher metric using a mapping between the space of positive measures and Euclidean space.  He obtains the Hellinger distance on the full space of measures and the Fisher distance on the subset of probability measures.  In order to highlight the interplay between Fisher theory and quantum information theory, we extend this construction to the space of positive-definite Hermitian matrices using Riemannian submersions and quotient manifolds. The analog of the Hellinger distance turns out to be the Bures-Wasserstein (BW) distance, a distance measure appearing in optimal transport, quantum information, and optimisation theory.  First we present an existing  derivation of the Riemannian metric and geodesics associated with this distance. Subsequently, we present a novel derivation of the Riemannian distance and geodesics for this metric on the subset of trace-one matrices, analogous to the Fisher distance for probability measures.

   \keywords{Information geometry \and positive-definite matrices \and Bures distance \and Wasserstein metric \and Optimal transport \and Quantum information}
\end{abstract}

\section{Introduction}

In this paper we investigate the geometrical properties of the Bures-Wasserstein (BW) distance on the space of positive-definite Hermitian matrices, $\pherm$. For $\Sigma, T \in \pherm$, this distance function is given by:
\begin{equation} \label{BWdistance1}
  d_\pherm^{BW}(\Sigma,T) = \left[\tr(\Sigma) + \tr(T) - 2 \tr\left(\left(\Sigma^{1/2}T\Sigma^{1/2}\right)^{1/2}\right)\right]^{1/2}.
\end{equation}
This function appears in optimal transport as a distance measure on the space of mean-zero Gaussian densities, where it is called the Wasserstein distance. In quantum information theory, this is a distance measure between quantum states or density matrices, called the Bures distance. \\

The development of this subject started when Rao realised that the Fisher information defines a Riemannian metric on the space of probability measures \cite{rao}. 
He obtained this metric by finding an isometry between the positive orthant of the unit sphere, equipped with the Euclidean metric, and the probability simplex. Later, the study of the geometrical properties of the probability simplex was extended to the quantum realm with notable contributions of Nagaoka, Petz and Hasegawa. An overview of this field can be found in  \cite{hayashi} and \cite{bengtsson}. The distance measure defined in \eqref{BWdistance1} was introduced by Helstrom \cite{helstrom} and Bures \cite{bures} as a measure of similarity between quantum states. In \cite{uhlmann}, Uhlmann derives the geometrical properties of this distance measure.\\

In the context of optimal transport, this distance measure was derived to be the $L^2$-Wasserstein distance on the space of covariance matrices for mean zero Gaussian distributions \cite{olkin}. 
Its geometrical properties were first studied in this context by Takatsu \cite{takatsu}. It is interesting to note that the obtained results are equivalent to but independent of the work of Uhlmann fifteen years prior. See Section \ref{BWQ} for more on this. The Levi-Civita connection and exponential map are computed in \cite{pis1}. A further question at the end of \cite{pis1} on the restriction to trace-one matrices formed the start of the research for this paper.  In \cite{bhatia1}, Bhatia discusses both the quantum and the optimal transport interpretation of the distance and introduces the name Bures-Wasserstein distance. Furthermore, the approach of Takatsu is refined using facts on Riemannian submersions and quotient manifolds. The curvature and parallel transport for the BW metric are examined in \cite{massart2019curvature} and \cite{thanwerdas2021n}. More recently, the geometrical structure discussed in this paper is of interest in the field of optimisation. It turns out that for this choice of geometry the exponential and logarithmic map are cheap to evaluate, which makes it particularly suitable for numerical computations \cite{Massart20}. In the present paper, we adapt the quotient manifold construction from Bhatia to the subspace of trace-one positive-definite Hermitian matrices, which is of particular importance in quantum information.  We recover the Riemannian distance for the BW metric on this space, which is called the \emph{Bures-Wasserstein angle}, and its the geodesics. This distance measure and geodesics were first derived by Uhlmann in the context of quantum information \cite{uhlmann}. \\

 The main goal of this paper is to highlight the interplay between Fisher theory and quantum information theory. 
 This is done by showing the parallel between the classical derivation of the Hellinger and Fisher distance due to Rao, and the derivation of the BW distance and angle. For this, we present a novel rigorous derivation of the BW angle that emphasises the geometrical aspects of the BW metric and does not utilise concepts from quantum information. This derivation is an extension of recent derivations for general positive-definite matrices, as presented in \cite{bhatia1,Massart20}. We then show how our derivation can be translated into the language of quantum information. A particular source of confusion is the use of different representations of the tangent space, as is also noted in \cite{thanwerdas2021n}. We present clear and general definitions of the the $(m)$- and $(e)$-representation used in quantum information in the language of differential geometry and show how different quantum information metrics can be denoted in each representation. \\

The next section of the paper discusses preliminary facts on differential geometry and matrix identities and gives general definitions for the $(m)$- and $(e)$-representation, which coincide with existing definitions from both classical and quantum information geometry. Section \ref{cig} illustrates the use of these representations in the context of classical information geometry and describes the derivation of the Hellinger and Fisher distance due to Rao. In Section \ref{qig} the representations of the tangent space are worked out explicitly for the SLD Fisher metric and the Bogoliubov metric used in quantum information. The main derivations of the paper can be found in Section \ref{BWgeom}, where the geometrical structure of the BW distance is investigated first on $\pherm$ and subsequently on the the trace-one subset, $\dens$. The last section of the paper compares the geometrical structure obtained in the foregoing to similar results in the field. An overview of the notation used in the paper can be found on page \pageref{notation}.

\section{Preliminaries} \label{prelimi}

\subsection{Differential geometry}

Let $\mathcal{M}, \mathcal{N}$ be smooth manifolds and $F: \mathcal{M} \to \mathcal{N}$ a smooth map. We will denote the differential of $F$ at $p \in \mathcal{M}$ by: $dF_p: T_p \mathcal{M} \to T_{F(p)}\mathcal{N}$. For $g$ a Riemannian metric on $\mathcal{M}$, the length of a tangent vector $v \in T_p \mathcal{M}$ is given by: $||v||_g = (g(v,v))^{1/2}$, and the length of a curve $\gamma: [a,b] \to \mathcal{M}$ is given by:
\begin{equation}
  L(\gamma) = \int_a^b ||\gamma'(t)||_g dt
\end{equation}
where $\gamma'(t_0) = d\gamma_{t_0}(\frac{d}{dt}|_{t_0})$. The Riemannian distance between $p,q \in \mathcal{M}$ is defined to be:
\begin{equation} \label{riemdis}
  d_\mathcal{M}(p,q) = \inf \{ L(\gamma) \mid \gamma: [0,1] \to \mathcal{M}, \gamma(0) = p, \gamma(1) = q \}.
\end{equation}
See chapter 2 of \cite{lee2018} for details.

\begin{definition}
  Let V be a real or complex vector space. An \textit{affine subspace} of $V$ is a subset $A \subset V$ together with a vector subspace $\widetilde{V} \subset V$ such that:
  \begin{itemize}
    \item $\forall a,b \in A, \ \exists v \in \widetilde{V}$ such that $a+v = b$
    \item $\forall v \in \widetilde{V}$ and $a \in A$ we have:  $a+v \in A$
  \end{itemize}
\end{definition}

Now let $\mathcal{M}$ be an open convex subset of an affine subspace $A$ of $V$ with associated vector space $\widetilde{V}$ and  $p \in \mathcal{M}$ fixed. We have that the following map is a vector space isomorphism \cite{lee2012}:
\begin{align}
  \id_1: \widetilde{V} &\to T_p \mathcal{M} \\
  \tilde{v} &\mapsto v \\
  \id_1(\tilde{v})(f) &\coloneqq \frac{d}{dt} \bigg\rvert_{t=0} f(p + t\tilde{v}),
\end{align}
 where $f$ is any smooth function on $\mathcal{M}$. We can therefore identify every tangent vector in $T_p \mathcal{M}$ with an element of $\widetilde{V}$ through $\id_1$. Given a basis $(e_1, ..., e_m)$ for $V$, we define the Euclidean inner product on $V$ to be such that $\langle e_i, e_j \rangle^{\eucl} \coloneqq \delta_{ij}$. The \textit{Euclidean metric} $\Euclh$ on $\mathcal{M}$ is defined such that for $\tilde{v}, \tilde{w} \in \widetilde{V}$, we have $\Euclh_p(\id_1(\tilde{v}), \id_1(\tilde{w})) = \re \left(\langle \tilde{v}, \tilde{w} \rangle^{\eucl} \right)$. The Riemannian distance associated with $\Euclh$ is denoted $\bar{d}$.\\

\subsection{Representations of the tangent space}

Let $\mathbb{K}$ be either $\mathbb{R}$ or $\mathbb{C}$ and $(e_1, ..., e_m)$ a fixed basis for $V$. This basis induces a coordinate map $k: V \to \mathbb{K}^m$  such that for $k(v) = x$ we have $v = \sum_i x_i e_i $. We define the \textit{$(m)$-representation} of an element in $T_p \mathcal{M}$ to be the coordinate representation of its $\id_1$-associated element in $\widetilde{V}$. Or in symbols,
\begin{equation}
  (m):T_p\mathcal{M} \xrightarrow{\id_1} \widetilde{V} \xrightarrow{k} \mathbb{K}^m.
\end{equation}
Using the Riesz representation theorem we let $\id_2$ be the identification between $T_p \mathcal{M}$ with its dual $T^*_p \mathcal{M}$ through the Euclidean metric such that: $v \leftrightarrow v'(\cdot) = \Euclh_p(v,\cdot)$. We define the $(m^*)$-representation of an element of $T_p^*\mathcal{M}$ to be the $(m)$-representation of its $\id_2$-associated element in $T_p\mathcal{M}$. In symbols,
\begin{equation}
  (m^*): T_p^*\mathcal{M} \xrightarrow{\id_2} T_p \mathcal{M}\xrightarrow{(m)} \mathbb{K}^m.
\end{equation}
A general Riemannian metric $\mathfrak{g}$ on $\mathcal{M}$ gives a final identification, $\id_3(\mathfrak{g})$, between $T_p \mathcal{M}$ and $T^*_p \mathcal{M}$: $v \leftrightarrow v'(\cdot) = \mathfrak{g}_p(v,\cdot)$. Given a metric $\mathfrak{g}$, we define the \textit{$(e)$-representation} of an element of $T_p \mathcal{M}$ as the $(m^*)$-representation of its $\id_3(\mathfrak{g})$-associated element in $T^*_p \mathcal{M}$. In symbols,
\begin{equation}
  (e): T_p\mathcal{M} \xrightarrow{\id_3(\mathfrak{g})} T_p^* \mathcal{M}\xrightarrow{(m^*)} \mathbb{K}^m.
\end{equation}
Note that the definition of the $(m)$- and $(e)$-representation implies:
\begin{equation}
  \mathfrak{g}_p(v,w) = \re \left( \langle v^{(e)}, w^{(m)} \rangle \right),
\end{equation}
with on the right the standard inner product on $\mathbb{K}^m$.

\begin{remark}
  The definitions above are inspired by Chapter 2 of \cite{ay}. We will see in Sections \ref{cig} and \ref{qig} that they correspond to the standard definitions in classical and quantum information theory given in e.g. \cite{amari2007} and \cite{hayashi}.
\end{remark}

\subsection{Matrix identities}
Let $\mathrm{GL}(n)$ be the space of invertible complex matrices and $\mathrm{U}(n)$ the unitary matrices. Every $M \in \mathrm{GL}(n)$ can be written as $M = U \Sigma$ where $\Sigma = \left(M^*M\right)^{1/2} \in \pherm$ and $U = M \Sigma^{-1} \in \mathrm{U}(n)$. This is called the \textit{polar decomposition} of $M$ and $U$ is called the \textit{unitary polar factor}. In the following result, related to Uhlmann's theorem and the Procrustes problem, the unitary polar factor shows up in a maximisation problem.
\begin{theorem} \label{Uhlmann}
  Let $\Sigma, T \in \pherm$ and consider the following maximisation problem:
  \begin{equation}
    \sup_{V \in \mathrm{U}(n)} \re \tr\left(\Sigma V T\right).
  \end{equation}
  Then the supremum is attained for $V = U^*$, with $U$ the unitary polar factor of $T\Sigma$.
\end{theorem}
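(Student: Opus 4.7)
The plan is to reduce the problem to an elementary bound on the diagonal entries of a unitary matrix. First, using the cyclic property of the trace, rewrite the objective as $\re \tr(V \cdot T\Sigma)$. Since $\Sigma, T \in \pherm$ are both invertible, so is $A := T\Sigma$, and the polar decomposition gives a factorization $A = P U$ with $P = (AA^*)^{1/2} = (T\Sigma^2 T)^{1/2} \in \pherm$ and $U \in U(n)$. Substituting and applying cyclicity once more,
\begin{equation}
  \re \tr(\Sigma V T) \;=\; \re \tr(V P U) \;=\; \re \tr\bigl((UV) P\bigr).
\end{equation}
Since $V \mapsto UV$ is a bijection of $U(n)$ onto itself, the original problem is equivalent to maximizing $\re \tr(WP)$ over $W \in U(n)$, and a maximizer $W^{\star}$ there corresponds to $V^{\star} = U^{*} W^{\star}$.

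Next, diagonalize the positive definite matrix $P$ as $P = E \Lambda E^{*}$ with $E \in U(n)$ and $\Lambda = \diag{\lambda_1, \ldots, \lambda_n}$, where every $\lambda_i > 0$. Another application of cyclicity yields
\begin{equation}
  \re \tr(WP) \;=\; \sum_{i=1}^{n} \lambda_i \, \re\bigl((E^{*} W E)_{ii}\bigr).
\end{equation}
Setting $W' := E^{*} W E$, which also ranges over all of $U(n)$, each diagonal entry satisfies $|W'_{ii}| \le 1$ because the $i$-th row of a unitary matrix is a unit vector. Hence $\re \tr(WP) \le \sum_i \lambda_i = \tr(P)$, and the upper bound is attained at $W = I$, that is at $V = U^{*}$.

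The one point that I expect to require care is the equality analysis. Because every $\lambda_i$ is strictly positive, saturating the bound forces $\re W'_{ii} = 1$ and hence $W'_{ii} = 1$ for all $i$; combined with the unit-norm constraint on each row this forces $W' = I$, so the maximizer $V = U^{*}$ is in fact unique. Conceptually, the heart of the argument is that left multiplication by the unitary polar factor $U$ absorbs the ``phase'' of $T\Sigma$, reducing an arbitrary trace functional on $U(n)$ to one involving only a Hermitian positive definite matrix, after which the spectral theorem closes the argument. The maximum value obtained, $\tr(P) = \tr\bigl((T\Sigma^2 T)^{1/2}\bigr) = \tr\bigl((\Sigma^{1/2} T^2 \Sigma^{1/2})^{1/2}\bigr)$, is precisely the quantity appearing in the Bures--Wasserstein distance \eqref{BWdistance1}, which is the reason this theorem is central to what follows.
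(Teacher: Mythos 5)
Your argument is correct and complete. Note that the paper itself does not prove Theorem \ref{Uhlmann}; it delegates entirely to \cite{bhatia1} and \cite{bengtsson}. Your self-contained proof is therefore a genuine addition rather than a paraphrase, though the mechanism you use --- cycle the trace to $\re\tr\left(V\, T\Sigma\right)$, absorb the unitary polar factor of $T\Sigma$ by left translation on $U(n)$, then diagonalize the remaining positive definite factor and bound the real parts of the diagonal entries of a unitary matrix by $1$ --- is the standard one found in those references. Every step checks out: the polar decomposition $T\Sigma = PU$ with $P = (T\Sigma^2 T)^{1/2}$ matches the paper's convention, left multiplication by $U$ is indeed a bijection of $U(n)$, and the equality analysis correctly exploits $\lambda_i > 0$ to force $W' = I$, which even gives uniqueness of the maximizer (more than the theorem asserts, but harmless and useful).

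One slip in your closing remark, outside the proof proper: the maximum value is $\tr(P) = \tr\left(\left(T\Sigma^2 T\right)^{1/2}\right) = \tr\left(\left(\Sigma T^2 \Sigma\right)^{1/2}\right)$, the second equality holding because $AA^*$ and $A^*A$ are isospectral for $A = T\Sigma$. It is \emph{not} in general equal to $\tr\left(\left(\Sigma^{1/2} T^2 \Sigma^{1/2}\right)^{1/2}\right)$: the latter carries the spectrum of $\Sigma T^2$ rather than of $\Sigma^2 T^2$. For instance, with $T = I$ and $\Sigma$ diagonal with entries $4$ and $1$, the first expression gives $\tr(\Sigma) = 5$ while your expression gives $\tr(\Sigma^{1/2}) = 3$. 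The link to the Bures--Wasserstein distance \eqref{BWdistance1} appears only after the substitution $\Sigma \mapsto \Sigma_1^{1/2}$, $T \mapsto \Sigma_2^{1/2}$, exactly as carried out in Proposition \ref{distance1}. This does not affect the validity of your proof of the theorem itself.
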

\begin{proof}
  See the proof of Theorem 1 in \cite{bhatia1}, or Theorem 9.2 in \cite{bengtsson}. \qed
\end{proof}
\begin{definition}
    The solution for $X$ to the \emph{Lyapunov equation}: $\Sigma X + X \Sigma = H$ with $\Sigma \in \pherm, H \in \herm$ will be denoted $\mathcal{L}_\Sigma(H)$. It turns out that this solution exists and is unique \cite{bhatia2}.
\end{definition}

\section{Classical information geometry} \label{cig}

In this section we will study $\mathcal{M}_+(\Omega)$ and $\mathcal{P}_+(\Omega)$, the space of stricly positive (resp. probability) measures on $\Omega = \{\omega_1, ..., \omega_n\}$. These spaces are open convex subsets of affine subspaces of the vector space of signed measures $\mathcal{S}(\Omega)$ and therefore we can use the definitions from the previous section. The canonical basis of $\mathcal{S}(\Omega)$ is given by the set of Dirac delta measures $(\delta_1, ..., \delta_n)$ such that $\delta_i(\omega_j) = \delta_{ij}$. This basis gives us the $(m)$-representation as described in the preliminaries. Now we let the metric on $\MO$ and $\PO$ be the Fisher information metric, given by:
\begin{equation}
  \Fis{a}{b}{\mu} = \sum_{i=1}^n \frac{a^{(m)}_i b^{(m)}_i}{\mu(\omega_i)},
\end{equation}
with $a,b$ in the tangent space of $\MO$ respectively $\PO$ at $\mu$. The $(e)$-representation is given by:
\begin{equation}
  a^{(e)}_i = \frac{a_i^{(m)}}{\mu(\omega_i)}.
\end{equation}
Another way of obtaining the $(e)$-representation for this case is by applying the $(m)$-representation to the pushforward of $a$ under the logarithm map. If $\log: \mathcal{M}_+(\Omega) \ni \mu \mapsto \log(\mu) \in \mathcal{S}(\Omega)$ such that $\log(\mu)(\omega) = \log(\mu(\omega)) \ \forall \omega \in \Omega$, then:
\begin{equation}
  a^{(e)} = \left(d\log_\mu(a)\right)^{(m)}.
\end{equation}
We will see however that this expression of the $(e)$-representation is not general enough for the quantum case.

\subsection{Hellinger distance, Fisher  metric and Fisher distance}\label{argumentrao}
In this section we derive the Riemannian metric corresponding to the Hellinger distance on $\mathcal{M}_+(\Omega)$. Then we find the Riemannian distance corresponding to the restriction of this metric to $\mathcal{P}_+(\Omega)$. These will turn out to be the Fisher metric and Fisher distance respectively. This derivation was first due to Rao \cite{rao} and can be seen as a special case of the derivation in Section \ref{BWgeom}. \\

The Hellinger distance is given by: 
\begin{equation}
    d^H(\mu,\nu) = \sqrt{\sum_{i=1}^{n} \left(\mu(\omega_i)^{1/2} - \nu(\omega_i)^{1/2}\right)^2}. 
\end{equation}
It can be obtained as the pushforward of the Euclidean distance, $\bar{d}$, under the square map:
\begin{equation} \label{isometry0}
  (\mathcal{M}_+(\Omega), \deucl) \ni \mu \mapsto \mu^2 \in (\mathcal{M}_+(\Omega), d^H),
\end{equation}
 where $\mu^2(\omega) \coloneqq \mu(\omega)^2 \ \ \forall \omega \in \Omega$. Extending the geometrical structure on the left from a distance function to the corresponding Riemannian Euclidean metric $\Euclh$, we have the following isometry\footnote{The isometries in this section are defined up to a constant factor.}:
\begin{equation} \label{isometry1}
  \left(\mathcal{M}_+(\Omega), \Euclh\right) \ni \mu \mapsto \mu^2 \in \left(\mathcal{M}_+(\Omega), \Fish\right).
\end{equation}
 From \eqref{isometry0} and \eqref{isometry1} it follows that the Hellinger distance is the geodesic distance for the Fisher metric in $\mathcal{M}_+(\Omega)$. \\

We now aim to find the Riemannian distance for the Fisher metric restricted to $\mathcal{P}_+(\Omega)$. It turns out that for this subset, the geodesic distance is no longer the Hellinger distance. In order to find the right geodesic distance, we can use the fact that the following restriction of \eqref{isometry1} remains an isometry:
\begin{equation}
  \left(\mathbb{S}_{\mathcal{M}_+(\Omega)}, \Euclh\right) \ni \mu \mapsto \mu^2 \in \left(\mathcal{P}_+(\Omega), \Fish\right).
\end{equation}
where $\mathbb{S}_{\mathcal{M}_+(\Omega)} \coloneqq \{\mu \in \mathcal{M}_+(\Omega): \sum_i \mu(\omega_i)^2 = 1\}$, the unit sphere in $\left(\mathcal{M}_+(\Omega), \Euclh \right)$ (see Figure \ref{fig:isometry}). We know that on this space, the geodesics are given by great circles and therefore we can also compute the Riemannian distance. Using the fact that this distance is carried over by the isometry, we obtain the Riemannian distance for the space of probability measures with the Fisher metric, called the Fisher distance. This is given by:
\begin{equation}
  d^F(p,q) = \arccos\left(\sum_{i=1}^{n} \big(p(\omega_i)q(\omega_i)\big)^{1/2}\right).
\end{equation}

\tikzmath{\scale = 1; \axislength = \scale * 1.3; \anglemu=70; \anglenu=25; \xmu = cos(\anglemu)^2; \ymu = sin(\anglemu)^2; \xnu = cos(\anglenu)^2; \ynu = sin(\anglenu)^2;} 

\begin{figure}
\begin{tikzpicture}[auto, scale=3, axis/.style={->, thin, dotted}, dot/.style={fill, circle, inner sep =0, minimum size = 1.4 mm}]

\coordinate (a) at (2.2,0);

\draw[axis] (0,0) -- (\axislength, 0) node [anchor = north] {$\delta_1$};
\draw[axis] (0,0) -- (0, \axislength) node [anchor = east] {$\delta_2$};
\draw[axis] (a) -- ($(a) + (\axislength, 0)$) node [anchor = north] {$\delta_1$};
\draw[axis] (a) -- ($(a) + (0, \axislength)$) node [anchor = east] {$\delta_2$};

\draw [very thin] (1,0) arc[start angle=0, end angle=90, radius=1];
\draw [very thin] ($(a) + (0,1)$) -- ($(a) + (1,0)$); 

\node (muinv) at (\anglemu:1cm) [dot, label=above right:$\pi^{-1}(p)$]  {};
\node (nuinv) at (\anglenu:1cm) [dot, label=right:$\pi^{-1}(q)$]  {};
\node (mu) at ($(a) + (\xmu, \ymu)$) [dot, label=above right:$p$] {};
\node (nu) at ($(a) + (\xnu, \ynu)$) [dot, label=right:$q$] {};

\node at (1.25, .15) {$\mathbb{S}_{\mathcal{M}_+(\Omega)}$};
\node at ($(a) + (1.25, .15)$) {$\mathcal{P}_+(\Omega)$};
\draw[thick] (muinv)  -- node[swap] {$d^H(p,q)$} (nuinv);

\draw [thick] (\anglemu:1.0cm) arc[start angle=\anglemu, end angle=\anglenu, radius=1.0]  node[pos=.5] {$d^F(p,q)$};

\draw[->] (1, 1.2) to [bend left = 20] node {$\pi$} ($(a) + (-.5, 1.2)$);

\end{tikzpicture}
\caption{A mapping ($\pi$) from the sphere ($\mathbb{S}_{\cM_+}(\Omega)$) to the simplex ($\mathcal{P}_+(\Omega)$) and an illustration of the relation between the Hellinger ($d^H$) and Fisher ($d^F$) distance}
\label{fig:isometry}
\end{figure}

\section{Quantum information geometry} \label{qig}
Let $\herm$ be the set of Hermitian matrices and $\dens$ be the subset of positive-definite Hermitian matrices with trace one. Within the context of Section \ref{prelimi}, we have $\mathcal{M} = \dens$, $V = \Cnn$ and $\widetilde{V} = \{H \in \herm : \tr(H) = 0  \}$. The basis vectors for $\Cnn$ are simply given by $(A_{11}, A_{12}, ..., A_{nn})$, where the $ij$-th entry of $A_{ij}$ is one and the rest zero.  From this, we get the $(m)$-representation for $T_\rho \dens$. For the submanifold of diagonal matrices (probability measures) Chentsov showed that the Fisher metric is the unique metric satisfying certain (statistically) natural conditions on the metric \cite{chentsov}. Petz proved that for $\dens$, this uniqueness no longer exists \cite{petz}. One of the suggested generelisations is the symmetrised logarithmic derivative (SLD) Fisher metric, see e.g. \cite{amari2007,hayashi}. For $H, K \in T_\rho \dens$ this Hermitian metric is given explicitly by:
\begin{equation}
  \SLD{H}{K}{\rho} = 2 \tr\left(\mathcal{L}_\rho\left(H^{(m)}\right)K^{(m)}\right).
\end{equation}
We will derive in Section \ref{BWgeom} that the Riemannian metric corresponding to the BW distance on $\pherm$ is given by:
\begin{equation}
  \BW{H}{K}{\Sigma} = \frac{1}{2} \re \tr\left(\mathcal{L}_\Sigma\left(H^{(m)}\right) K^{(m)}\right).
\end{equation}
Furthermore we will prove that the Riemannian distance on $\dens$ for this metric is given by:
\begin{equation} \label{BWDdens}
  d^{BW}_\dens(\rho_1, \rho_2) = \arccos\left(\re\tr\left(\left(\rho_2^{1/2}\rho_1\rho_2^{1/2}\right)^{1/2}\right)\right).
\end{equation}
Because the real parts of $\SLDh$ and $\BWh$ are equal on $\dens$, we can conclude that $d^{BW}_\dens$ is the distance function for $\SLDh$ (up to a constant factor).

\subsubsection*{\textit{(e)-representations in quantum information geometry}}
The SLD Fisher metric for $H, K \in T_\rho \dens$ is given up to a constant by:
\begin{equation}
  \SLD{H}{K}{\rho} = \tr\left(\mathcal{L}_\rho\left(H^{(m)}\right)K^{(m)}\right).
\end{equation}
From the preliminaries it follows that for this choice of metric we have the following relation between the $(m)$- and $(e)$-representation:
\begin{align}
  H^{(e)} &= \mathcal{L}_\rho\left(H^{(m)}\right), \\
  H^{(m)} &= H^{(e)}\rho + \rho H^{(e)}.
\end{align}
Expressing the SLD Fisher metric in terms of the $(e)$-representation therefore gives the potentially more familiar expression:
\begin{equation}
  \SLD{H}{K}{\rho} = \tr\left(H^{(e)} \left(K^{(e)}\rho + \rho K^{(e)}\right) \right).
\end{equation}
Another common metric is the Bogoliubov metric. In the $(m)$-representation this is given by:
\begin{equation}
  \mathfrak{g}^{Bo}_\rho(H,K) = \tr\left( (d\log_\rho(H))^{(m)} K^{(m)}\right),
\end{equation}
where $\log$ is the inverse of the matrix exponential $\exp(\Sigma) = \sum_{k=0}^\infty \frac{1}{k} \Sigma^k$. The relation between the $(m)$- and $(e)$-representation is given by:
\begin{align}
  H^{(e)} &= (d\log_\rho(H))^{(m)}, \\
  H^{(m)} &= \int_0^1 \rho^\lambda H^{(e)} \rho^{1-\lambda} d\lambda.
\end{align}
The Bogoliubov metric in $(e)$-representation is therefore given by:
\begin{equation}
  \mathfrak{g}^{Bo}_\rho(H,K) = \tr\left( H^{(e)} \int_0^1 \rho^\lambda K^{(e)} \rho^{1-\lambda} d\lambda\right).
\end{equation}

\begin{remark}
  In the rest of the paper we will exclusively and implicitely use the $(m)$-representation for the elements of the tangent bundle of $\pherm$ and $\dens$.
\end{remark}

\section{Bures-Wasserstein Geometry} \label{BWgeom}

In this section, we explore the geometry induced by the Bures-Wasserstein distance. We start by  recovering the metric and geodesics corresponding to the BW distance on $\pherm$. For this, we follow the exposition in \cite{bhatia1} and provide extra justification for some of the results.\footnote{This derivation is presented here because it lays the foundation for the results derived in the second part of the section and it allows for a better comparison with the classical argument of Rao.} Subsequently, we restrict the obtained metric to $\dens$ and derive the corresponding distance function and geodesics. The flow of the argument is analogous to Section \ref{argumentrao}, where we start from the Hellinger distance on $\mathcal{M}_+(\Omega)$, derive the Fisher metric, and subsequently find the Riemannian distance and geodesics for this metric restricted to the submanifold $\mathcal{P}_+(\Omega)$. We start by discussing some general results from Riemannian geometry.\\

Let $(\mathcal{M},g)$ and $(\mathcal{N},h)$ be Riemannian manifolds and $\pi: (\mathcal{M},g) \to (\mathcal{N},h)$ a smooth submersion. We can make the following orthogonal decomposition of the tangent space at $p \in \mathcal{M}$:
\begin{equation}
  T_p \mathcal{M} = \mathcal{V}(\pi,p) \oplus \mathcal{H}(\pi,p,g),
\end{equation}
where $\mathcal{V}(\pi,p)$ is the kernel of $d\pi_p$ and $\mathcal{H}(\pi,p,g)$ is its orthogonal complement with respect to the metric at $p$. We will refer to these subspaces and their elements as \textit{vertical} and \textit{horizontal} respectively. A curve $\gamma$ in $\mathcal{M}$ is said to be horizontal if $\gamma'(t)$ is horizontal for all $t$. We say that a submersion $\pi$ is  \textit{Riemannian} if for all $p \in \mathcal{M}$ and $v,w \in \mathcal{H}(\pi,p,g)$ the following holds:
\begin{equation} \label{Riemsub}
  g_p(v,w) = h_{\pi(p)}(d\pi_p v, d\pi_p w).
\end{equation}
That is, the map $d\pi_p|_{\mathcal{H}(\pi,p,g)}$ is an isometric isomorphism.\\

\begin{theorem} \label{theorem2}
    Let $\pi: (\mathcal{M},g) \to (\mathcal{N},h)$ be a Riemannian submersion. For every horizontal curve $\gamma$ in $\cM$ we have:
    \begin{itemize}
        \item $L(\gamma) = L(\pi \circ \gamma)$.
    \end{itemize}
    For every geodesic $\gamma$ in $\mathcal{M}$ such that $\gamma'(0)$ is horizontal we have:
    \begin{itemize}
      \item $\gamma'(t)$ is horizontal for all t,
      \item $\pi \circ \gamma$ is a geodesic in $\mathcal{N}$.
    \end{itemize}
  \end{theorem}
  \begin{proof}
    Lemma 26.11 in \cite{michor2008topics} and Proposition 2.109 in \cite{gallot1990riemannian}. \qed
  \end{proof}

\begin{theorem} \label{theorem1}
  Suppose $(\mathcal{M},g)$ is a Riemannian manifold and $G$ a compact Lie group of isometries of $(\mathcal{M},g)$ acting freely and smoothly on $\mathcal{M}$. Then the orbit space $\mathcal{N} = \mathcal{M}/G$ has a unique smooth manifold structure and there exists a unique Riemannian metric $h$ such that the quotient map $\pi : (\mathcal{M},g) \to (\mathcal{N},h)$ is a Riemannian submersion. Furthermore the Riemannian distance $d_\cN$ is given by quotient distance\footnote{The distance of a quotient metric space is formally defined as: $\inf\{d(p_1, q_1) + ... + d(p_n,q_n)\}$ with the infimum over all finite sequences $(p_1, ..., p_n), (q_1,...,q_n)$ such that $p_1 \in \pi^{-1}(p), q_n \in \pi^{-1}(q)$. Proposition 4.1 in \cite{lang2006length} shows that in our setting this distance reduces to \eqref{quotdis}.}:
  \begin{equation} \label{quotdis}
      d'_\cN (p,q) \coloneqq \inf_{\substack{{\tilde{p} \in \pi^{-1}(p)} \\ {\tilde{q} \in \pi^{-1}(q)}}} d_\mathcal{M}(\tilde{p},\tilde{q}).
  \end{equation}
\end{theorem}
\begin{proof}
  The first part of the theorem is a standard result and can be found in e.g. Corollary 2.29 of \cite{lee2018}. We show the second part explicitly. Recall from equation \eqref{riemdis} in the preliminaries that the Riemannian distance function on $\mathcal{N}$ is given by:
  \begin{equation}
      d_\mathcal{N}(p,q) = \inf\{L(\gamma) \mid \gamma:[0,1] \to \mathcal{N}, \gamma(0) = p, \gamma(1)=q \}.
  \end{equation}
  From \cite{michor2008topics} we know that for the type of Riemannian submersion described in the theorem, we can find for every curve $\gamma$ on the RHS, a horizontal curve $\tilde\gamma$ in $\mathcal{M}$ such that $\pi \circ \tilde\gamma = \gamma$. So in particular we have $\tilde\gamma(0) \in \pi^{-1}(p)$ and $\tilde\gamma(1) \in \pi^{-1}(p)$. From Theorem \ref{theorem2} it follows that $L(\tilde\gamma) = L(\gamma)$. Therefore we have $d_\mathcal{N}(p,q) \geq d_\mathcal{N}'(p,q)$. \\

  \noindent For the reverse inequality, we note that for every curve $\tilde\gamma$ in $\mathcal{M}$ we have:
  \begin{align}
    L(\tilde\gamma) &= \int_0^1 ||\tilde\gamma'(t)||_g \ dt \\
      &\geq \int_0^1 || d\pi_{\tilde\gamma(t)}(\tilde\gamma'(t))||_h \ dt \\
      &= \int_0^1 || (\pi \circ \tilde\gamma)'(t))||_h \ dt \\
      &= L(\pi \circ  \tilde\gamma),
  \end{align}
  where in the second line we use that $\pi$ is a Riemannian submersion. From this it follows immediately that $d_\mathcal{N}(p,q) \leq d_\mathcal{N}'(p,q)$. \qed
\end{proof}

\subsection{Geometry on the space $\pherm$}

\subsubsection*{\textit{Riemannian metric and distance function}}

In this section we will prove the following theorem.
\begin{theorem} \label{BWM}
The Bures-Wasserstein distance on $\pherm$ given in \eqref{BWdistance1} is a Riemannian distance. The corresponding metric is given by:
\begin{equation} \label{BWmetric}
  \BW{H}{K}{\Sigma} = \re \tr(\mathcal{L}_\Sigma(H) \Sigma \mathcal{L}_\Sigma(K)) = \frac{1}{2} \re \tr(\mathcal{L}_\Sigma(H) K),
\end{equation}
with $H,K \in T_\Sigma \pherm \cong \herm$.
\end{theorem}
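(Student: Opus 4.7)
The plan is to realize $\pherm$ as a Riemannian quotient of $(GL(n),\Euclh)$ in the sense of Theorem \ref{theorem1}, and then read off both the metric and the distance from Theorems \ref{theorem2} and \ref{theorem3}. Concretely, I would consider the smooth surjection $\pi: GL(n) \to \pherm$ given by $\pi(M) = MM^{*}$, which factors through the right action of $U(n)$ on $GL(n)$ by $M \cdot U = MU$. Two preliminary checks are needed: (i) this action is free and isometric for the Euclidean metric $\Euclh_M(X,Y) = \re\tr(XY^{*})$, because $\|XU\|_{\Euclh}^{2} = \re\tr(XUU^{*}X^{*}) = \|X\|_{\Euclh}^{2}$, and (ii) the polar decomposition identifies the orbit space $GL(n)/U(n)$ diffeomorphically with $\pherm$ via $\pi$. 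Since $U(n)$ is compact, Theorem \ref{theorem1} then produces a unique Riemannian metric $\BWh$ on $\pherm$ turning $\pi$ into a Riemannian submersion.

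Next I would compute $\BWh$ explicitly by constructing the horizontal lift. The differential is $d\pi_{M}(X) = XM^{*} + MX^{*}$, and at the Hermitian representative $M = \Sigma^{1/2}$ the vertical space consists of tangent vectors $\Sigma^{1/2}A$ with $A$ anti-Hermitian (the tangent directions of the $U(n)$-orbit). Given $H \in T_{\Sigma}\pherm = \herm$, I would propose the candidate lift $X_{H} := \mathcal{L}_{\Sigma}(H)\,\Sigma^{1/2}$, check that $d\pi_{\Sigma^{1/2}}(X_{H}) = \mathcal{L}_{\Sigma}(H)\Sigma + \Sigma\mathcal{L}_{\Sigma}(H) = H$ (this is precisely the Lyapunov equation), and verify horizontality by observing that $\re\tr(\Sigma^{1/2}\mathcal{L}_{\Sigma}(H)\Sigma^{1/2}A)$ is the real part of the trace of a Hermitian times an anti-Hermitian matrix, hence vanishes. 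The Riemannian submersion identity \eqref{Riemsub} then yields
\begin{equation}
  \BW{H}{K}{\Sigma} = \re\tr(X_{H}X_{K}^{*}) = \re\tr\bigl(\mathcal{L}_{\Sigma}(H)\,\Sigma\,\mathcal{L}_{\Sigma}(K)\bigr),
\end{equation}
which is the first asserted form. The second form follows by multiplying the Lyapunov equation $K = \Sigma\mathcal{L}_{\Sigma}(K) + \mathcal{L}_{\Sigma}(K)\Sigma$ on the left by $\mathcal{L}_{\Sigma}(H)$ and taking $\re\tr$; using that $\re\tr(ABC) = \re\tr(ACB)$ whenever $A,B,C$ are Hermitian (since $\tr(ACB) = \overline{\tr(ABC)}$), the two resulting terms are equal, giving $\re\tr(\mathcal{L}_{\Sigma}(H)K) = 2\,\re\tr(\mathcal{L}_{\Sigma}(H)\Sigma\mathcal{L}_{\Sigma}(K))$.

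Finally, to identify the Riemannian distance of $\BWh$ with the formula \eqref{BWdistance1}, I would invoke Theorem \ref{theorem3}: the distance equals the pushforward
\begin{equation}
  \inf\bigl\{\|M_{1}-M_{2}\|_{F} : M_{1}M_{1}^{*}=\Sigma,\ M_{2}M_{2}^{*}=T\bigr\}.
\end{equation}
Parametrizing $M_{1} = \Sigma^{1/2}U$, $M_{2} = T^{1/2}V$ with $U,V \in U(n)$, the squared distance becomes $\tr\Sigma + \tr T - 2\,\re\tr(\Sigma^{1/2}UV^{*}T^{1/2})$. Uhlmann's theorem (Theorem \ref{Uhlmann}) maximizes $\re\tr(T^{1/2}\Sigma^{1/2}W)$ over $W = UV^{*} \in U(n)$, producing $\tr\bigl((T^{1/2}\Sigma T^{1/2})^{1/2}\bigr) = \tr\bigl((\Sigma^{1/2}T\Sigma^{1/2})^{1/2}\bigr)$ by the unitary invariance of singular values. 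Substituting yields exactly $d_{\pherm}^{BW}(\Sigma,T)^{2}$.

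The main conceptual step is guessing the correct horizontal lift $X_{H} = \mathcal{L}_{\Sigma}(H)\Sigma^{1/2}$; everything else is routine verification or a direct appeal to Uhlmann's theorem. The only mild subtlety is checking that the pushforward infimum is actually attained (so that the two inner products coincide), which follows from the compactness of $U(n)$ in the optimization above.
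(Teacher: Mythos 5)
Your construction of the quotient, the horizontal lift $X_H = \mathcal{L}_\Sigma(H)\Sigma^{1/2}$, and the derivation of both forms of the metric all match the paper's argument (the paper phrases the lift via the horizontal space $\{HM : H \in \herm\}$ and substitutes $A = \tilde H M$ into \eqref{Riemsub}, but the computation is the same). The gap is in the final step. Theorem \ref{theorem3} identifies the Riemannian distance on $\pherm$ with $\inf_{M_i \in \pi^{-1}(\cdot)}\, d_{GL(n)}(M_1,M_2)$, where $d_{GL(n)}$ is the Riemannian distance of $\Euclh$ \emph{restricted to} $GL(n)$ --- not the Frobenius distance $\|M_1-M_2\|_2$ of the ambient space $\mathbb{C}^{n\times n}$. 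Since $GL(n)$ is an open, non-convex subset, one only has $d_{GL(n)} \geq \|\cdot\|_2$ a priori, and the inequality can be strict: the straight segment joining $M_1$ to $M_2$ may pass through singular matrices, forcing admissible curves to detour. Your optimization over $U,V \in U(n)$ therefore only shows that the pushforward distance is bounded \emph{below} by the Bures-Wasserstein formula. The subtlety you flag (attainment of the infimum by compactness of $U(n)$) is not the issue.

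The paper closes this with Lemmas \ref{prop1} and \ref{geod1}: writing $(1-t)\Sigma^{1/2} + t\,T^{1/2}U = \bigl((1-t)I + t\,T^{1/2}U\Sigma^{-1/2}\bigr)\Sigma^{1/2}$ and using that $T^{1/2}U\Sigma^{-1/2}$ is positive definite when $U$ is the unitary polar factor of $T^{1/2}\Sigma^{1/2}$, the segment between the Uhlmann-optimal representatives stays in $GL(n)$, so $d_{GL(n)} = \|\cdot\|_2$ for that particular pair and the chain of inequalities collapses to equality. You need this (or an equivalent argument) to conclude; everything else in your proposal is sound and follows essentially the paper's route.
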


In order to prove Theorem \ref{BWM} we need some preliminary results. The final proof can be found on page \pageref{proof1}. \\

From Theorem \ref{theorem1} we know that there exists a metric $\tilde{h}$ such that the quotient map $(\mathrm{GL}(n),\Euclh) \to (\mathrm{GL}(n)/\mathrm{U}(n), \tilde{h})$ is a Riemannian submersion. We make the following identification: $\mathrm{GL}(n)/\mathrm{U}(n) \ni M \cdot \mathrm{U}(n) \leftrightarrow MM^* \in \pherm$. This gives us the following map:
\begin{align} \label{pi}
  \pi: \left(\mathrm{GL}(n),\Euclh \right) &\to (\pherm, h) \\
  M &\mapsto MM^*.
\end{align}
 We will derive the prelimary results in the following order. First, we find the horizontal and vertical subspaces for $\pi$ (Proposition \ref{subspaces}), which we use to show that $h$ is given by \eqref{BWmetric} (Proposition \ref{h=BW}). Next, we show that the BW distance on $\pherm$ is equal to the quotient distance given in \eqref{quotdis} for $\pi$ (Proposition \ref{distance1}). Then we can use Theorem \ref{theorem1} to conclude that the BW distance is actually the Riemannian distance for $\BWh$.

\begin{proposition} \label{subspaces}
  Let $\herm$ and $\mathbb{H}^\perp(n)$ be the set of Hermitian and skew-Hermitian matrices, respectively. The vertical and horizontal space of $\pi$ at $M \in (\mathrm{GL}(n), \Euclh)$ are given by:
  \begin{align}
    \mathcal{V}(\pi, M) &= \big\{ K \left(M^{-1}\right)^* : K \in \mathbb{H}^\perp(n) \big\}\label{verti}\\
    \mathcal{H}(\pi, M, \Euclh ) &= \big\{ H M : H \in \herm \big\}. \label{hori}
  \end{align}
\end{proposition}
\begin{proof}
  We have:
  \begin{equation}
    d\pi_M(A) = AM^* + M A^*.
  \end{equation}
  Therefore $d\pi_M(A) = 0 \iff A \in \mathcal{V}(\pi,M)$. Furthermore, we have that $\Euclh_M\left(K\left(M^{-1}\right)^*,A\right) = \re \tr\left(K\left(M^{-1}\right)^*A^*\right)= 0 \ \forall K \in \mathbb{H}^\perp(n) \iff A \in \mathcal{H}(\pi, M, \Euclh )$. \qed
\end{proof}

\begin{proposition} \label{h=BW}
  The metric $h$ on $\pherm$ is given by \eqref{BWmetric}.
\end{proposition}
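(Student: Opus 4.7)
The plan is to exploit the Riemannian submersion property of $\pi$ together with the explicit description of the horizontal space given in Proposition \ref{subspaces}. By definition of a Riemannian submersion, for any $H, K \in T_\Sigma \pherm$ and any $M \in \pi^{-1}(\Sigma)$, we have $h_\Sigma(H,K) = \Euclh_M(\tilde{H}, \tilde{K})$ where $\tilde{H}, \tilde{K} \in \mathcal{H}(\pi, M, \Euclh)$ are the horizontal lifts satisfying $d\pi_M(\tilde{H}) = H$ and $d\pi_M(\tilde{K}) = K$. So the first task is to identify these lifts.

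From Proposition \ref{subspaces}, every horizontal vector at $M$ has the form $X M$ with $X \in \herm$, and from the proof we have $d\pi_M(XM) = XM M^* + M M^* X^* = X\Sigma + \Sigma X$. Setting this equal to $H$ and invoking the uniqueness of the solution to the Lyapunov equation (recalled after Theorem \ref{Uhlmann}), I get $X = \mathcal{L}_\Sigma(H)$, so the horizontal lift is $\tilde{H} = \mathcal{L}_\Sigma(H) M$, and similarly $\tilde{K} = \mathcal{L}_\Sigma(K) M$. Plugging into the Euclidean metric and using $MM^* = \Sigma$ together with Hermiticity of $\mathcal{L}_\Sigma(K)$ yields
\begin{equation}
  h_\Sigma(H,K) = \re \tr \bigl( \mathcal{L}_\Sigma(H) M M^* \mathcal{L}_\Sigma(K)^* \bigr) = \re \tr \bigl( \mathcal{L}_\Sigma(H)\, \Sigma\, \mathcal{L}_\Sigma(K) \bigr),
\end{equation}
which is the first of the two claimed expressions.

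For the second equality, the idea is to substitute the Lyapunov relation $K = \mathcal{L}_\Sigma(K)\Sigma + \Sigma \mathcal{L}_\Sigma(K)$ into $\re \tr(\mathcal{L}_\Sigma(H) K)$, obtaining
\begin{equation}
  \re \tr \bigl( \mathcal{L}_\Sigma(H) K \bigr) = \re \tr \bigl( \mathcal{L}_\Sigma(H) \mathcal{L}_\Sigma(K) \Sigma \bigr) + \re \tr \bigl( \mathcal{L}_\Sigma(H) \Sigma \mathcal{L}_\Sigma(K) \bigr),
\end{equation}
and then showing that the two terms on the right are equal. This last step is the delicate one: it uses that $\mathcal{L}_\Sigma(H), \mathcal{L}_\Sigma(K), \Sigma$ are all Hermitian, so that the adjoint of $\mathcal{L}_\Sigma(H)\Sigma\mathcal{L}_\Sigma(K)$ is $\mathcal{L}_\Sigma(K)\Sigma\mathcal{L}_\Sigma(H)$, and then one rotates by the cyclic trace identity to rewrite this as $\mathcal{L}_\Sigma(H)\mathcal{L}_\Sigma(K)\Sigma$. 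Since $\re \tr X = \re \tr X^*$, the two real parts coincide, giving the factor $\tfrac{1}{2}$.

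The main obstacle I anticipate is not any deep geometric point but rather bookkeeping: one must be careful that $\mathcal{L}_\Sigma(H)$ is well-defined as a Hermitian matrix (which follows from Hermiticity of $H$ and $\Sigma$ via the uniqueness of the Lyapunov solution) and that the horizontal subspace identification actually produces horizontal lifts that do not depend on the choice of $M \in \pi^{-1}(\Sigma)$. The latter independence is automatic once $\pi$ is Riemannian, but it is reassuring to observe it directly: a right multiplication $M \mapsto MU$ with $U \in U(n)$ sends $\mathcal{L}_\Sigma(H)M$ to $\mathcal{L}_\Sigma(H) MU$, and the Euclidean inner product is invariant under this action, so the final expression depends only on $\Sigma, H, K$.
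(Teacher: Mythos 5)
Your proposal is correct and follows essentially the same route as the paper: identify the horizontal lift of $H$ at $M \in \pi^{-1}(\Sigma)$ as $\mathcal{L}_\Sigma(H)M$ via Proposition \ref{subspaces} and the Lyapunov equation, apply the Riemannian submersion identity to get $\re\tr\left(\mathcal{L}_\Sigma(H)\,\Sigma\,\mathcal{L}_\Sigma(K)\right)$, and then use Hermiticity plus cyclicity of the trace to fold in the Lyapunov relation and produce the factor $\tfrac{1}{2}$. The only cosmetic differences are that the paper specializes to $M = \Sigma^{1/2}$ while you work with an arbitrary point of the fiber, and that you expand $\re\tr(\mathcal{L}_\Sigma(H)K)$ rather than summing two equal expressions for $h$; both are the same computation.
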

\begin{proof}
  Because $\pi$ is a Riemannian submersion, we know from \eqref{Riemsub} that for $A,B \in \mathcal{H}(\pi, M, \Euclh )$, $h$ needs to satisfy:
  \begin{equation}
    \Eucl{A}{B}{M} = h_{MM^*}(d\pi_M A, d\pi_M B). \\
  \end{equation}
  Working this out gives:
  \begin{equation}
    \re \tr(AB^*)= h_{MM^*}(MA^* + AM^*, MB^* + BM^*). \label{inner1}
  \end{equation}
  Now we plug in $A = \tilde{H} M, B = \tilde{K} M$ for $\tilde{H}, \tilde{K} \in \herm$. Then \eqref{inner1} becomes:
  \begin{equation}
    \re \tr(\tilde{H}MM^*\tilde{K})= h_{MM^*}(MM^*\tilde{H} + \tilde{H}MM^*, MM^*\tilde{K} + \tilde{K}MM^*).
  \end{equation}
  If we set $M= \Sigma^{1/2}$ and $\tilde{H} = \mathcal{L}_\Sigma(H), \tilde{K} = \mathcal{L}_\Sigma (K)$, we get for general $\Sigma \in \pherm$ and $H,K \in \herm$:
  \begin{equation}
    h_\Sigma(H,K) = \re \tr(\mathcal{L}_\Sigma(H) \Sigma \mathcal{L}_\Sigma(K)).
  \end{equation}
  Using the properties of the trace we have:
  \begin{equation}
    \re \tr(\mathcal{L}_\Sigma(H) \Sigma \mathcal{L}_\Sigma(K)) = \re \tr(\mathcal{L}_\Sigma(K) \Sigma \mathcal{L}_\Sigma(H)) = \re \tr(\mathcal{L}_\Sigma(H) \mathcal{L}_\Sigma(K) \Sigma ).
  \end{equation}
  Adding the first and last expression gives:
  \begin{equation}
    2 h_\Sigma(H,K) = \tr\left[\mathcal{L}_\Sigma(H) \Big(\Sigma \mathcal{L}_\Sigma(K) + \mathcal{L}_\Sigma(K) \Sigma\Big)\right] = \re \tr(\mathcal{L}_\Sigma(H) K).
  \end{equation}
  Dividing both sides by two gives the final result. \qed
\end{proof}

In order to show that the BW distance on $\pherm$ is equal to the quotient distance of $\pi$, we first have to investigate the distance on $(\mathrm{GL}(n),\Euclh)$. We know that on $(\mathbb{C}^{n \times n}, \Euclh)$ the distance is given by: $\bar d_{\mathbb{C}^{n \times n}}(A,B) = ||A - B||_2 = \left[\tr\left((A-B)(A-B)^* \right)\right]^{1/2}$. Because $\mathrm{GL}(n) \subset \mathbb{C}^{n \times n}$ we have $ \bar d_{\mathrm{GL}(n)} \geq \bar d_{\mathbb{C}^{n \times n}}$. However we can show, using the following lemmata, that for some choices of $A$ and $B$ the curve $\gamma(t) = (1-t)A + tB$ stays in $\mathrm{GL}(n)$ and thus the two distances are equal.

\begin{lemma} \label{prop1}
  For $\Sigma, T \in \pherm$ and $U = T\Sigma(\Sigma T^2 \Sigma)^{-1/2} $, the unitary polar factor of $T\Sigma$, we have that $TU\Sigma^{-1} \in \pherm$.
\end{lemma}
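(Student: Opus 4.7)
The plan is to work directly from the defining relation of the polar decomposition rather than from a raw formula. Writing $M = T\Sigma$, which is invertible since $T,\Sigma\in\pherm$, the convention $M = QU$ with $Q = (MM^*)^{1/2}$ gives
\begin{equation}
  Q = (T\Sigma^2 T)^{1/2}, \qquad U = Q^{-1}\,T\Sigma.
\end{equation}
So I would first confirm (a one-line check using $MM^*=T\Sigma^2T$) that $U$ is unitary, and record the factorisation $T\Sigma = Q U$.

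Next, I would substitute this expression for $U$ into $TU\Sigma^{-1}$. The $\Sigma$ at the right of $U = Q^{-1}T\Sigma$ cancels against $\Sigma^{-1}$, leaving
\begin{equation}
  TU\Sigma^{-1} \;=\; T\,Q^{-1}\,T\Sigma\,\Sigma^{-1} \;=\; T\,(T\Sigma^2T)^{-1/2}\,T.
\end{equation}
This is the heart of the lemma: the particular form of $U$ as the polar factor of $T\Sigma$ is precisely what produces a $\Sigma$ on the right to cancel the $\Sigma^{-1}$.

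Finally, I would argue that $T(T\Sigma^2T)^{-1/2}T$ lies in $\pherm$. Since $T\Sigma^2T\in\pherm$, its inverse square root $Q^{-1}=(T\Sigma^2T)^{-1/2}$ is Hermitian and positive definite. With $T$ Hermitian and invertible, the sandwich $T Q^{-1} T$ is Hermitian, and for every nonzero $v\in\mathbb{C}^n$,
\begin{equation}
  v^{*}\bigl(T Q^{-1} T\bigr)v \;=\; (Tv)^{*}\,Q^{-1}\,(Tv) \;>\; 0,
\end{equation}
because $Tv\neq 0$ and $Q^{-1}$ is positive definite. This yields $TU\Sigma^{-1}\in\pherm$.

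There is no real obstacle; the whole content is the cancellation step, which only works because the $\Sigma$ in $U=Q^{-1}T\Sigma$ is on the right, a feature of the specific unitary polar factor rather than of a generic unitary. The subsequent use of the lemma (to argue that a straight-line segment between carefully chosen preimages remains in $GL(n)$, so that the Euclidean and $GL(n)$ distances coincide) is what will eventually identify $d^{BW}_{\pherm}$ with the pushforward distance for $\pi$.
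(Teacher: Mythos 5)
Your argument is correct and complete, and it is a genuine addition rather than a restatement: the paper offers no proof of this lemma, only a citation to \cite{bhatia1}. Your route --- write $M=T\Sigma$, read off $Q=(MM^*)^{1/2}=(T\Sigma^2T)^{1/2}$ and $U=Q^{-1}T\Sigma$ from the defining relation, cancel the $\Sigma$'s to get $TU\Sigma^{-1}=T(T\Sigma^2T)^{-1/2}T$, and conclude positivity from the congruence $v^*\bigl(TQ^{-1}T\bigr)v=(Tv)^*Q^{-1}(Tv)>0$ --- is the standard short argument, and it has the pleasant side effect of identifying $TU\Sigma^{-1}$ explicitly as the positive definite matrix $T(T\Sigma^2T)^{-1/2}T$ (the optimal transport map carrying $\Sigma^2$ to $T^2$). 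One point you should make explicit rather than pass over silently: you have corrected the formula in the statement. The lemma displays $U=T\Sigma(T\Sigma^2T)^{-1/2}$, i.e.\ $M(MM^*)^{-1/2}$, whereas the paper's own definition of the unitary polar factor in the preliminaries is $U=(MM^*)^{-1/2}M$, which is what you actually use. The two expressions differ in general, and only your version places a $\Sigma$ at the right end of $U$ so that the cancellation goes through; with the formula as printed one would get $TU\Sigma^{-1}=T^2\Sigma(T\Sigma^2T)^{-1/2}\Sigma^{-1}$, which is not Hermitian in general. (The printed expression would equal the polar factor only if the inner matrix were $M^*M=\Sigma T^2\Sigma$ instead of $MM^*=T\Sigma^2T$.) So state that you take $U=(T\Sigma^2T)^{-1/2}T\Sigma$, consistent with the definition of the polar decomposition given in the preliminaries, and flag the displayed formula in the lemma as a typo; with that caveat your proof stands as written.
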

\begin{proof}
  Substituting $U$ gives:
  \begin{align}
    TU\Sigma^{-1} &= T^2\Sigma(\Sigma T^2 \Sigma)^{-1/2}  \Sigma^{-1}\\
    &= \Sigma^{-1} \Sigma T^2\Sigma(\Sigma T^2 \Sigma)^{-1/2}  \Sigma^{-1}\\
    &= \Sigma^{-1} (\Sigma T^2 \Sigma)^{1/2}  \Sigma^{-1}.
  \end{align} 
  It is easy to see that this is Hermitian. Positive definiteness follows from the fact that $(T\Sigma^2T)^{-1/2}$ is positive-definite. \qed
\end{proof}

\begin{lemma} \label{geod1}
  Let $\Sigma, T \in \pherm$ and $U,V$ such that $UV^* = T\Sigma(\Sigma T^2 \Sigma)^{-1/2} $, the unitary polar factor of $T\Sigma$. Then, $\gamma(t) = (1-t)\Sigma V + t T U$ is in $\mathrm{GL}(n)$ for $t \in [0,1]$.
\end{lemma}
\begin{proof}
  We can write:
  \begin{equation}
    \gamma(t) = \left((1-t)I + tTUV^*\Sigma^{-1}\right) \Sigma V.
  \end{equation}
  By the previous lemma know that $TUV^*\Sigma^{-1}$ is positive-definite. Therefore we have that $\left( (1-t)I + tTUV^*\Sigma^{-1} \right)$ is positive-definite for $t \in [0,1]$ and thus in $\mathrm{GL}(n)$. Since $\mathrm{GL}(n)$ is closed under multiplication we have that $\gamma(t) \in \mathrm{GL}(n)$. \qed
\end{proof}

Now we are in position to study the quotient distance \eqref{quotdis} for $\pi$. We have that $(\mathcal{M},g) = (\mathrm{GL}(n), \Euclh)$ and $\mathcal{N} = \pherm$.  Plugging this in gives the following distance function:
\begin{align}
d'_\pherm(\Sigma_1,\Sigma_2) &= \inf\{ \deucl_{\mathrm{GL}(n)}(M_1, M_2): M_i \in \pi^{-1}(\Sigma_i)\} \label{quotdis2}\\
&= \inf_{U,V \in \mathrm{U}(n)} \deucl_{\mathrm{GL}(n)}(\Sigma_1^{1/2} V, \Sigma_2^{1/2} U).
\end{align}

\begin{proposition} \label{distance1}
  The BW distance on $\pherm$ is equal to the quotient distance $d'_\pherm$. That is,
  \begin{multline}
    \left[tr(\Sigma_1) + \tr(\Sigma_2) - 2 \tr\left(\left(\Sigma_1^{1/2}\Sigma_2\Sigma_1^{1/2}\right)^{1/2}\right)\right]^{1/2} = \\ \inf_{U,V \in \mathrm{U}(n)} \deucl_{\mathrm{GL}(n)}(\Sigma_1^{1/2} V, \Sigma_2^{1/2} U).
  \end{multline}
  Moreover, the infimum on the right is attained when $UV^*$ is the unitary polar factor of $\Sigma_2^{1/2}\Sigma_1^{1/2}$, given by $\Sigma_2^{1/2}\Sigma_1^{1/2} \left( \Sigma_1^{1/2} \Sigma_2 \Sigma_1^{1/2} \right)^{-1/2}$.
\end{proposition}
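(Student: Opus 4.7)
The plan is to reduce the pushforward distance to a Hilbert--Schmidt optimisation over the unitary group, solve that optimisation via Uhlmann's theorem (Theorem \ref{Uhlmann}), and then argue that the straight line realising the minimum in $\mathbb{C}^{n\times n}$ stays in $GL(n)$ (by Lemma \ref{geod1}), so there is no extra cost from restricting to $GL(n)$.

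First I would expand the squared Euclidean distance in $\mathbb{C}^{n\times n}$:
\begin{equation}
\bar{d}_{\mathbb{C}^{n\times n}}(\Sigma_1^{1/2}V,\Sigma_2^{1/2}U)^2 = \tr(\Sigma_1) + \tr(\Sigma_2) - 2\re\tr\bigl(\Sigma_1^{1/2}\Sigma_2^{1/2}UV^*\bigr),
\end{equation}
so that minimising over $U,V\in U(n)$ amounts to maximising $\re\tr(\Sigma_1^{1/2}\Sigma_2^{1/2}W)$ over $W = UV^*\in U(n)$. Applying Theorem \ref{Uhlmann} with the roles $\Sigma \leftrightarrow \Sigma_1^{1/2}$, $T\leftrightarrow \Sigma_2^{1/2}$, the maximiser is $W^* = U$, the unitary polar factor of $\Sigma_2^{1/2}\Sigma_1^{1/2}$. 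Using the polar decomposition $\Sigma_2^{1/2}\Sigma_1^{1/2} = (\Sigma_2^{1/2}\Sigma_1\Sigma_2^{1/2})^{1/2}\,U$ together with the cyclicity of the trace, the maximum value is $\tr((\Sigma_2^{1/2}\Sigma_1\Sigma_2^{1/2})^{1/2}) = \tr((\Sigma_1^{1/2}\Sigma_2\Sigma_1^{1/2})^{1/2})$; this already gives
\begin{equation}
\inf_{U,V\in U(n)} \bar{d}_{\mathbb{C}^{n\times n}}(\Sigma_1^{1/2}V,\Sigma_2^{1/2}U)^2 = \tr(\Sigma_1) + \tr(\Sigma_2) - 2\tr\bigl((\Sigma_1^{1/2}\Sigma_2\Sigma_1^{1/2})^{1/2}\bigr),
\end{equation}
and shows that the minimum is attained at $V = I$ together with the $U$ prescribed in the statement.

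Second I would upgrade from $\bar{d}_{\mathbb{C}^{n\times n}}$ to $\bar{d}_{GL(n)}$. The obvious inequality $\bar{d}_{GL(n)}\geq \bar{d}_{\mathbb{C}^{n\times n}}$ gives one direction: $d'_\pherm(\Sigma_1,\Sigma_2) \geq d_\pherm^{BW}(\Sigma_1,\Sigma_2)$. For the reverse, I would apply Lemma \ref{geod1} with $A = \Sigma_1^{1/2}$ and $B = \Sigma_2^{1/2}U$, where $U$ is the unitary polar factor of $\Sigma_2^{1/2}\Sigma_1^{1/2}$: the lemma asserts that the straight segment $\gamma(t) = (1-t)A + tB$ is contained in $GL(n)$ for $t\in[0,1]$. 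Consequently the length of this segment, which equals $\bar{d}_{\mathbb{C}^{n\times n}}(A,B)$, is an upper bound on $\bar{d}_{GL(n)}(A,B)$, giving equality of the two. Combined with the previous step this yields $d'_\pherm(\Sigma_1,\Sigma_2) \leq \bar{d}_{GL(n)}(\Sigma_1^{1/2}, \Sigma_2^{1/2}U) = d_\pherm^{BW}(\Sigma_1,\Sigma_2)$, closing the loop and identifying the explicit minimiser $(V,U) = (I,\Sigma_2^{1/2}\Sigma_1^{1/2}(\Sigma_2^{1/2}\Sigma_1\Sigma_2^{1/2})^{-1/2})$.

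The main obstacle is the second step: one must be careful that the Euclidean infimum over $\mathbb{C}^{n\times n}$ really is attained along a curve lying in $GL(n)$, because otherwise the restriction to $GL(n)$ could strictly increase the distance. This is precisely what Lemma \ref{geod1} provides, and it is the reason the Uhlmann-type optimum is not merely the minimum over the ambient Hilbert--Schmidt distance, but also over geodesic distance in $GL(n)$. The remaining work — expanding traces, applying cyclicity, and identifying the spectra of $\Sigma_2^{1/2}\Sigma_1\Sigma_2^{1/2}$ and $\Sigma_1^{1/2}\Sigma_2\Sigma_1^{1/2}$ — is routine once the Uhlmann maximiser is in hand.
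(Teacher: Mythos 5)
Your proposal is correct and follows essentially the same route as the paper's own proof: expand the squared Hilbert--Schmidt distance, reduce the minimisation over $U,V$ to a trace maximisation over a single unitary, solve that with Theorem \ref{Uhlmann}, and then invoke Lemma \ref{geod1} to show the minimising straight segment stays in $GL(n)$ so that $\bar{d}_{GL(n)}$ and $\bar{d}_{\mathbb{C}^{n\times n}}$ agree at the optimum. One small slip worth fixing: the optimal $W=UV^*$ is the unitary polar factor of $\Sigma_2^{1/2}\Sigma_1^{1/2}$ itself rather than its adjoint (you write $W^*=U$, but your subsequent use of the polar decomposition $\Sigma_2^{1/2}\Sigma_1^{1/2}=(\Sigma_2^{1/2}\Sigma_1\Sigma_2^{1/2})^{1/2}U$ to evaluate the maximum, and the conclusion $(V,U)=(I,U)$, are consistent only with $W=U$).
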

\begin{proof}
  From the discussion above we know:
  \begin{align}
    d'^2_\pherm(\Sigma_1,\Sigma_2) &= \inf_{U,V \in \mathrm{U}(n)} \deucl_{\mathrm{GL}(n)}^2(\Sigma_1^{1/2} V, \Sigma_2^{1/2} U) \\
    &\geq \inf_{U,V \in \mathrm{U}(n)} \deucl_{\Cnn}^2(\Sigma_1^{1/2} V, \Sigma_2^{1/2} U) \label{distances}\\
    &= \inf_{U,V \in \mathrm{U}(n)} ||\Sigma_1^{1/2}V - \Sigma_2^{1/2}U||_2^2. \\
    &= \inf_{U,V \in \mathrm{U}(n)} \tr\left(\left(\Sigma_1^{1/2}V - \Sigma_2^{1/2}U\right)\left(\Sigma_1^{1/2}V - \Sigma_2^{1/2}U\right)^*\right)\\
    &= \tr(\Sigma_1) + \tr(\Sigma_2) - 2 \sup_{U,V \in \mathrm{U}(n)} \re \tr\left(\Sigma_1^{1/2}VU^*\Sigma_2^{1/2}\right).
  \end{align}
  We saw in Theorem \ref{Uhlmann} of the preliminaries that the supremum on the right is obtained for $UV^*$ as in the proposition. Moreover, by Lemma \ref{geod1} we have that for this choice $(1-t)\Sigma_1^{1/2}V + t\Sigma_2^{1/2}U$ stays in $\mathrm{GL}(n)$ and thus we have $\deucl_{\mathrm{GL}(n)}(\Sigma_1^{1/2} V, \Sigma_2^{1/2} U) = \deucl_{\mathbb{C}^{n \times n}}(\Sigma_1^{1/2} V, \Sigma_2^{1/2} U)$. Therefore we get equality in \eqref{distances} and conclude:
  \begin{align}
    d'_\pherm(\Sigma_1,\Sigma_2) &= \left[\tr(\Sigma_1) + \tr(\Sigma_2) - 2 \tr\left(\left(\Sigma_1^{1/2}\Sigma_2\Sigma_1^{1/2}\right)^{1/2}\right)\right]^{1/2}\\
    &= \bwd{\pherm}(\Sigma_1,\Sigma_2).
  \end{align} \qed
\end{proof}

\begin{proof} \textit{(of Theorem \ref{BWM})} \label{proof1}
By Theorem \ref{theorem1} we know that there exists a unique metric $h$ such that $\pi$ as defined in equation \eqref{pi} is a Riemannian submersion. In Proposition \ref{h=BW} we saw that this metric is given by $\BWh$, as defined in \eqref{BWmetric} in the statement of the theorem. From Theorem \ref{theorem1} we know that the Riemannian distance for this metric is given by the quotient distance for $\pi$, \eqref{quotdis2}. In Proposition \ref{distance1} we saw that this distance is equal to the BW-distance on $\pherm$. This is what we set out to proof. \qed
\end{proof}

\subsubsection*{\textit{Geodesics}}

In order to find a geodesic between $\Sigma_1$ and $\Sigma_2$ in $\pherm$, according to Theorem \ref{theorem2}, we need to find a geodesic $\gamma$ in $\mathrm{GL}(n)$ between points in $\pi^{-1}(\Sigma_1)$ and $\pi^{-1}(\Sigma_2)$ such that $\gamma'(0)$ is horizontal.

\begin{theorem} \label{geop}
  A geodesic between $\Sigma_1$ and $\Sigma_2$ in $\left(\pherm, \BWh \right)$ is given by $\pi \circ \gamma$, where
  \begin{equation}
    \gamma(t) = (1-t)\Sigma_1^{1/2}V + t \Sigma_2^{1/2}U
  \end{equation}
  and $UV^*$ is again the unitary polar factor of $\Sigma_2^{1/2}\Sigma_1^{1/2}$.
\end{theorem}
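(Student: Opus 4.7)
The plan is to apply Theorem \ref{theorem2}: if I can exhibit a geodesic $\gamma$ in $(GL(n), \Euclh)$ whose initial velocity is horizontal, whose endpoints project to $\Sigma_1$ and $\Sigma_2$, and which in fact lies in $GL(n)$, then $\pi \circ \gamma$ is automatically a geodesic in $(\pherm, \BWh)$. Since the Euclidean metric on $GL(n)$ is just the restriction of a constant inner product on $\Cnn$, the geodesics of $(GL(n), \Euclh)$ are straight lines (parameterised affinely), as long as they remain in the open subset $GL(n)$.

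First I would verify the three boundary/global conditions: (i) $\gamma(0) = \Sigma_1^{1/2}$ and $\gamma(1) = \Sigma_2^{1/2} U$ project via $\pi$ to $\Sigma_1$ and $\Sigma_2$ respectively, the latter using $UU^* = I$; (ii) Lemma \ref{geod1}, with $A = \Sigma_1^{1/2}$ and $B = \Sigma_2^{1/2}U$, guarantees $\gamma(t) \in GL(n)$ for $t \in [0,1]$, so $\gamma$ is an honest geodesic of $(GL(n), \Euclh)$; (iii) the length of $\gamma$ is $\|\Sigma_2^{1/2}U - \Sigma_1^{1/2}\|_2$, which by the computation in Proposition \ref{distance1} equals $\bwd{\pherm}(\Sigma_1, \Sigma_2)$, giving an independent consistency check.

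The main obstacle is showing that $\gamma'(0) = \Sigma_2^{1/2}U - \Sigma_1^{1/2}$ lies in the horizontal subspace at $\Sigma_1^{1/2}$. By Proposition \ref{subspaces} with $M = \Sigma_1^{1/2}$, this amounts to showing that
\begin{equation}
H \;:=\; \bigl(\Sigma_2^{1/2} U - \Sigma_1^{1/2}\bigr)\Sigma_1^{-1/2} \;=\; \Sigma_2^{1/2} U \Sigma_1^{-1/2} - I
\end{equation}
is Hermitian. But applying Lemma \ref{prop1} with the substitution $T = \Sigma_2^{1/2}$ and $\Sigma = \Sigma_1^{1/2}$ (so that $U = T\Sigma(T\Sigma^2 T)^{-1/2} = \Sigma_2^{1/2}\Sigma_1^{1/2}(\Sigma_2^{1/2}\Sigma_1\Sigma_2^{1/2})^{-1/2}$ exactly matches the $U$ of the theorem), we obtain $TU\Sigma^{-1} = \Sigma_2^{1/2} U \Sigma_1^{-1/2} \in \pherm$, hence in particular Hermitian. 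Subtracting $I$ preserves this, so $H \in \herm$ and $\gamma'(0) = H\Sigma_1^{1/2}$ is horizontal.

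With these ingredients assembled, Theorem \ref{theorem2} immediately yields that $\pi \circ \gamma$ is a geodesic in $(\pherm, \BWh)$ connecting $\Sigma_1$ to $\Sigma_2$, completing the proof.
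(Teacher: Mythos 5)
Your proposal is correct and follows essentially the same route as the paper: straight-line geodesic in $(GL(n),\Euclh)$, Lemma \ref{geod1} to stay in $GL(n)$, Lemma \ref{prop1} (with $T=\Sigma_2^{1/2}$, $\Sigma=\Sigma_1^{1/2}$) to show $\Sigma_2^{1/2}U\Sigma_1^{-1/2}-I$ is Hermitian and hence $\gamma'(0)$ horizontal via Proposition \ref{subspaces}, then Theorem \ref{theorem2}. You are in fact slightly more careful than the paper in spelling out the substitution into Lemma \ref{prop1} and in verifying the endpoint projections.
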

\begin{proof}
  It is clear that $\gamma$ is a geodesic in  $\left(\mathbb{C}^{n\times n}, \Euclh\right)$. We saw in Lemma \ref{geod1} that $\gamma$ stays in  $\mathrm{GL}(n)$. It remains to show $\gamma'(0) \in \mathcal{H}(\pi, \Sigma_1^{1/2}, \Euclh)$. We have:
  \begin{align}
    \gamma'(0) &= \Sigma_2^{1/2}U - \Sigma_1^{1/2}V\\
    &= \left(\Sigma_2^{1/2}UV^*\Sigma_1^{-1/2} - I\right) \Sigma_1^{1/2}V.
  \end{align}
  By Lemma \ref{prop1} we have that $\Sigma_2^{1/2}UV^*\Sigma_1^{-1/2}$ is Hermitian and thus the same holds for $\Sigma_2^{1/2}UV^*\Sigma_1^{-1/2} - I$. Using Proposition \ref{subspaces} we conclude $\gamma'(0)$ is horizontal. The statement of the theorem now follows from Theorem \ref{theorem2}. \qed
\end{proof}

\subsection{Geometry on the space $\dens$}

\subsubsection*{\textit{Inner product and distance function}}

Let us denote the unit sphere in  $\left(\mathbb{C}^{n\times n}, \Euclh\right)$ by:
\begin{equation}
  \mathbb{S}_{\mathbb{C}^{n\times n}} \coloneqq \{A \in \mathbb{C}^{n\times n} : \tr(AA^*) = 1\}
\end{equation}
and $\sgl \coloneqq \mathbb{S}_{\mathbb{C}^{n\times n}} \cap \mathrm{GL}(n)$, its restriction to $\mathrm{GL}(n)$. Note that:
\begin{equation}
  \pi^{-1}(\dens) = \sgl.
\end{equation}
We now apply Theorem \ref{theorem1} to this submanifold of $\mathrm{GL}(n)$. We choose the metric $g$ to be the restriction of $\Euclh$ to $\sgl$ and the Lie group $G$ again $\mathrm{U}(n)$. Since both the metric and the quotient map are just restrictions of the ones in Theorem \ref{BWM}, the resulting metric on $\sgl / \mathrm{U}(n) \cong \dens$ will also be the restriction of $\BWh$. From Theorem \ref{theorem1} we therefore know that the Riemannian distance function on $\dens$ corresponding to this restricted metric is given by the quotient distance defined in \eqref{quotdis}. Just as in the classical case (Section \ref{cig}) where the Fisher distance on $\PO$ is different from the Hellinger distance, it will turn out that the Riemannian distance on $\dens$ is different from the BW distance on $\pherm$. In order to compute the distance on $\dens$, we first investigate the geometry on $\sgl$. \\

Geodesics on a Euclidean sphere are obtained by intersecting the sphere with (hyper)planes through the origin.  If $M$ and $N$ are two non-antipodal point on $\mathbb{S}_{\mathbb{C}^{n\times n}}$ we can obtain the unnormalised geodesic by projecting the geodesic in $\mathbb{C}^{n\times n}$ onto  $\mathbb{S}_{\mathbb{C}^{n\times n}}$. More specifically, if $\gamma(t)= (1-t)M + (t)N$ is the geodesic in $\mathbb{C}^{n\times n}$, then
\begin{equation}
  \tilde\gamma(t) = \frac{\gamma(t)}{||\gamma(t)||_2}
\end{equation}
is the unnormalised geodesic in $\mathbb{S}_{\mathbb{C}^{n\times n}}$. Moreover, we have that $\gamma(t) \in \mathrm{GL}(n) \implies \tilde\gamma(t) \in \mathrm{GL}(n)$ since they are scalar multiples of each other. The Euclidean distance on $\mathbb{S}_{\mathbb{C}^{n\times n}}$ is given by:
\begin{equation}
  \bar{d}_{\mathbb{S}_{\mathbb{C}^{n\times n}}}(M,N)= \arccos(\re \tr (MN^*)).
\end{equation}
Just as before, we have that in general $\bar{d}_{\mathbb{S}_{\mathbb{C}^{n\times n}}} \leq \bar{d}_\sgl$, but when $\tilde{\gamma}$ stays in $\mathrm{GL}(n)$, we have that the two distances are equal. We are now in position to deduce the Riemannian distance function on $\dens$.

\begin{theorem}
  On $\dens$, the Riemannian distance for the Bures-Wasserstein metric is given by:
  \begin{equation}
    d^{BW}_\dens(\rho_1, \rho_2) = \arccos\left(\tr\left(\left(\rho_2^{1/2}\rho_1\rho_2^{1/2}\right)^{1/2}\right)\right).
  \end{equation}
  We will refer to this as the \emph{Bures-Wasserstein angle}
\end{theorem}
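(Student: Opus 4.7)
The plan is to mirror the derivation of $d^{BW}_\pherm$, but working on the unit sphere $\sgl$ rather than on all of $GL(n)$. By the restriction argument immediately preceding the theorem, $\pi|_\sgl : \sgl \to \dens$ is a Riemannian submersion and $\pi^{-1}(\rho) = \{\rho^{1/2}U : U \in U(n)\}$, so Theorem \ref{theorem3} yields
\[
  d^{BW}_\dens(\rho_1,\rho_2) \;=\; \inf_{U,V \in U(n)} \bar{d}_\sgl\bigl(\rho_1^{1/2}V,\ \rho_2^{1/2}U\bigr).
\]

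First I would bound this pushforward distance from below by the ambient spherical distance, using $\bar{d}_\sgl \geq \bar{d}_{\mathbb{S}_\Cnn}$ and the closed form $\bar{d}_{\mathbb{S}_\Cnn}(M,N) = \arccos(\re\tr(MN^*))$ recalled just above. The condition $\tr(\rho_i)=1$ ensures $\rho_1^{1/2}V,\ \rho_2^{1/2}U \in \mathbb{S}_\Cnn$, so the lower bound reads
\[
  d^{BW}_\dens(\rho_1,\rho_2) \;\geq\; \arccos\Bigl(\sup_{U,V\in U(n)}\re\tr\bigl(\rho_1^{1/2} V U^* \rho_2^{1/2}\bigr)\Bigr).
\]
Writing $W = V U^*$ reduces this to a single unitary optimization, which is precisely the setting of Uhlmann's theorem (Theorem \ref{Uhlmann}) with $\Sigma = \rho_1^{1/2}$, $T = \rho_2^{1/2}$: the supremum equals $\tr\bigl((\rho_2^{1/2}\rho_1\rho_2^{1/2})^{1/2}\bigr)$ and is attained by taking $V=I$ and $U$ to be the unitary polar factor of $\rho_2^{1/2}\rho_1^{1/2}$, exactly as in Proposition \ref{distance1}.

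To upgrade the inequality to equality, I would show that the spherical geodesic between $\rho_1^{1/2}$ and $\rho_2^{1/2}U$ (for this optimal $U$) in fact lies inside $\sgl$, thereby also realizing $\bar{d}_\sgl$. Following the preceding discussion, this spherical geodesic is $\tilde{\gamma}(t) = \gamma(t)/\|\gamma(t)\|_2$, where $\gamma(t) = (1-t)\rho_1^{1/2} + t\rho_2^{1/2}U$ is the Euclidean segment in $\Cnn$. Lemma \ref{geod1}, applied with $\Sigma = \rho_1$ and $T = \rho_2$, already guarantees $\gamma(t) \in GL(n)$ for every $t \in [0,1]$. Since $\tilde{\gamma}(t)$ is merely $\gamma(t)$ rescaled by the positive scalar $\|\gamma(t)\|_2^{-1}$, invertibility is preserved, so $\tilde{\gamma}(t) \in \sgl$ for all $t$. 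This supplies a curve in $\sgl$ of length $\bar{d}_{\mathbb{S}_\Cnn}(\rho_1^{1/2},\rho_2^{1/2}U)$, forcing equality in the bound above and producing the claimed $\arccos$ formula.

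The main obstacle is precisely this last coordination step: in principle the pair $(U,V)$ that maximizes the ambient inner product need not be connectable by a curve remaining inside the constrained manifold $\sgl$. Lemma \ref{geod1} is what makes the two optimizations compatible, and the key conceptual point is that radial projection onto the sphere is a positive rescaling, which preserves membership in $GL(n)$; all other steps are routine analogues of the $\pherm$ case carried out in Proposition \ref{distance1}.
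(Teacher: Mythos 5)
Your proposal is correct and follows essentially the same route as the paper's own proof: the quotient-distance formula from Theorem \ref{theorem3}, the lower bound via $\bar{d}_\sgl \geq \bar{d}_{\mathbb{S}_{\mathbb{C}^{n\times n}}}$, Uhlmann's theorem to identify the optimal $U,V$, and Lemma \ref{geod1} plus positive rescaling to show the spherical geodesic stays in $\sgl$ and force equality. No gaps.
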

\begin{proof}
  From the definition of the quotient distance we have:
\begin{align}
  \bwd{\dens}(\rho_1,\rho_2) &= \inf_{U,V \in \mathrm{U}(n)} \bar{d}_\sgl\left(\rho_1^{1/2}V, \rho_2^{1/2}U\right)\\
  &\geq \inf_{U,V \in \mathrm{U}(n)} \bar{d}_{\mathbb{S}_{\mathbb{C}^{n\times n}}} \left(\rho_1^{1/2}V, \rho_2^{1/2}U\right) \label{ineq2} \\
  &= \inf_{U,V \in \mathrm{U}(n)} \arccos\left(\re \tr \left(\rho_1^{1/2}VU^*\rho_2^{1/2}\right)\right).
\end{align}
As before, the infimum is attained when $UV^*$ is the unitary polar factor of $ \rho_2^{1/2}\rho_1^{1/2}$. From Lemma \ref{geod1} and the above discussion we know that for this choice of $U$ and $V$ we have $(1-t)\rho_1^{1/2}V + t\rho_2^{1/2}U \in \mathrm{GL}(n)$. Therefore we have equality in \eqref{ineq2} and conclude the proof. \qed
\end{proof}

\subsubsection*{\textit{Geodesics}}
Analogous to Theorem \ref{geop}, we get the following result for the geodesics in $\dens$:
\begin{theorem}
  An unnormalised geodesic between $\rho_1$ and $\rho_2$ in $(\dens, \BWh)$ is given by $\pi \circ \tilde\gamma$, where
  \begin{equation}
    \tilde\gamma(t) = \frac{\gamma(t)}{||\gamma(t)||_2},
  \end{equation}
  with
  \begin{equation}
    \gamma(t) = (1-t)\rho_1^{1/2}V + t \rho_2^{1/2}U
  \end{equation}
  and $UV^*$ is the unitary polar factor of $\rho_2^{1/2}\rho_1^{1/2}$.
\end{theorem}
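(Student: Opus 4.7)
The plan is to mimic the proof of Theorem \ref{geop}, but at the level of the restricted submersion $\pi : (\sgl, \Euclh|_{\sgl}) \to (\dens, \BWh|_{\dens})$. By Theorem \ref{theorem2}, it suffices to exhibit a geodesic $\tilde\gamma$ in $\sgl$ whose endpoints lie in $\pi^{-1}(\rho_1)$ and $\pi^{-1}(\rho_2)$ and whose initial velocity is horizontal; the image $\pi \circ \tilde\gamma$ is then a geodesic in $\dens$ (up to the reparametrisation responsible for the word \emph{unnormalised}, since projecting a straight line radially onto the sphere does not give a unit-speed parametrisation of the great circle).

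First I would check the endpoint and domain conditions: $\tilde\gamma(0) = \rho_1^{1/2}$ and $\tilde\gamma(1) = \rho_2^{1/2}U$ both lie in $\sgl$ because $\tr(\rho_i) = 1$ and $U$ is unitary, and they project under $\pi$ to $\rho_1$ and $\rho_2$ respectively. By Lemma \ref{geod1}, $\gamma(t) \in GL(n)$ for all $t \in [0,1]$, so the scalar rescaling $\tilde\gamma(t) = \gamma(t)/\|\gamma(t)\|_2$ also stays in $GL(n)$, and by construction it lies on the unit sphere $\mathbb{S}_{\mathbb{C}^{n\times n}}$. Thus $\tilde\gamma$ takes values in $\sgl$. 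Since it is a radial projection of a straight line onto the sphere, its image is an arc of a great circle, i.e.\ an (unnormalised) geodesic of $\mathbb{S}_{\mathbb{C}^{n\times n}}$; because this arc lies entirely in $GL(n)$, the induced distance functions on $\sgl$ and on $\mathbb{S}_{\mathbb{C}^{n\times n}}$ coincide along it (as remarked just before the distance theorem), so $\tilde\gamma$ is also an unnormalised geodesic in $\sgl$.

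The remaining step, and the main obstacle, is to verify that $\tilde\gamma'(0)$ is horizontal with respect to the submersion $\pi|_{\sgl}$. The horizontal space at $M \in \sgl$ is the intersection of $\mathcal{H}(\pi,M,\Euclh)$ (given by Proposition \ref{subspaces} as $\{HM : H \in \herm\}$) with the tangent space to the sphere (vectors $A$ with $\re\tr(AM^*) = 0$). In the proof of Theorem \ref{geop} we already observed that $\gamma'(0) = \rho_2^{1/2}U - \rho_1^{1/2} = (\rho_2^{1/2}U\rho_1^{-1/2} - I)\rho_1^{1/2}$ is of the form $H\rho_1^{1/2}$ with $H$ Hermitian (using Lemma \ref{prop1}). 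Since $\|\gamma(0)\|_2 = \tr(\rho_1)^{1/2} = 1$, a direct computation gives
\begin{equation}
\tilde\gamma'(0) = \gamma'(0) - \re\tr\bigl(\gamma'(0)\, \rho_1^{1/2}\bigr)\, \rho_1^{1/2} = (H - cI)\,\rho_1^{1/2},
\end{equation}
with $c = \re\tr(\gamma'(0)\rho_1^{1/2}) \in \mathbb{R}$. Here $H - cI$ is still Hermitian, and by construction $\tilde\gamma'(0)$ is orthogonal to $\rho_1^{1/2}$, so the vector lies in both pieces of the intersection and is therefore horizontal in $\sgl$.

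Assembling these facts, Theorem \ref{theorem2} applied to the Riemannian submersion on $\sgl$ yields that $\pi \circ \tilde\gamma$ is an unnormalised geodesic in $(\dens, \BWh)$ joining $\rho_1$ and $\rho_2$, which is the claimed statement.
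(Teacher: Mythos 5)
Your proposal is correct and follows essentially the same route as the paper: reduce to Theorem \ref{theorem2} for the restricted submersion on $\sgl$, use Lemma \ref{geod1} to keep the curve in $GL(n)$, and check horizontality of $\tilde\gamma'(0)$ by subtracting the radial component from the already-horizontal $\gamma'(0)$. Your writing of $\tilde\gamma'(0)$ as $(H-cI)\rho_1^{1/2}$ with $H-cI$ Hermitian is just a repackaging of the paper's observation that $\tilde\gamma'(0)$ is a linear combination of the two horizontal vectors $\gamma'(0)$ and $\rho_1^{1/2}$, and your explicit check that $\tilde\gamma'(0)$ is tangent to the sphere is a detail the paper leaves implicit.
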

\begin{proof}
  Start by noting that the vertical and horizontal spaces are simply the restrictions of \eqref{verti} and \eqref{hori} to $T_{\rho^{1/2}}\sgl$. We saw above that $\tilde\gamma(t) \in \mathrm{GL}(n)$ and therefore by Theorem \ref{theorem2} it is enough to show $\tilde\gamma'(0)$ is horizontal. The direction of $\tilde\gamma'(0)$ can be obtained by projecting $\gamma'(0) \in T_{\rho_1^{1/2}V}\mathrm{GL}(n)$ onto the subspace $T_{\rho_1^{1/2}V}\sgl$. Since $\Span\{\rho_1^{1/2}V\}$ is the orthogonal complement of $T_{\rho_1^{1/2}V}\sgl$ within $T_{\rho_1^{1/2}V}\mathrm{GL}(n)$, this projection is given by:
  \begin{equation}
    \tilde\gamma'(0) \propto \gamma'(0) - \re \tr\left(\rho_1^{1/2}V\gamma'(0)^*\right)\rho_1^{1/2}V.
  \end{equation}
  From the proof of Theorem \ref{geop} it follows that $\gamma'(0)$ is horizontal in $T_{\rho_1^{1/2}V}\mathrm{GL}(n)$. Since $\rho_1^{1/2}V$, viewed as a tangent vector, is also horizontal in $T_{\rho_1^{1/2}V}\mathrm{GL}(n)$, it follows that $\tilde\gamma'(0)$ is horizontal in $T_{\rho_1^{1/2}V}\sgl$. \qed
\end{proof}

\section{BW geometry and quantum information} \label{BWQ}

In quantum information theory, a quantum state is represented  by a trace-one positive semi-definite matrix $\rho$ called a density operator. We will denote the set of these matrices by $\densz$. From the spectral theorem it follows that any $\rho \in \densz$ can be written as $\rho = U D U^*$ with $U \in \mathrm{U}(n)$ and $D$ a real non-negative diagonal matrix with trace one. Since this latter matrix can interpreted as a probability distribution on $\{1,2,...,n\}$, $\densz$ can be viewed as a generelisation of the space of probability distributions. See \cite{wilde} for a more detailed account of this statement. The $\rho$'s for which the diagonal of $D$ is a Dirac delta function are called pure states and their set is denoted $\densp$.\\

\subsubsection*{\textit{Fubini-Study metric}}
Note that a pure state $\rho \in \densp$ can also be written as follows: $\rho = \phi \phi^T$ where $\phi \in \mathbb{S}_{\mathbb{C}^n} \coloneqq \{\psi \in \mathbb{C}^n: ||\psi||_2 = 1\}$. The set of pure states can be identified with the complex projective space. This space is obtained by identifying two elements in $\mathbb{S}_{\mathbb{C}^n}$ that differ a complex factor, i.e. the quotient space $\mathbb{S}_{\mathbb{C}^n}/U(1)$. The identification between the pure states and $\mathbb{S}_{\mathbb{C}^n}/U(1) $ is given explicitly as follows: $\densp \ni \rho = \phi \phi^T \leftrightarrow [\phi] \in \mathbb{S}_{\mathbb{C}^n}/U(1)$. Combining the above, we get the following quotient map:
\begin{equation}
  \mathbb{S}_{\mathbb{C}^n} \ni \phi \mapsto \phi\phi^T \in \densp.
\end{equation}
If we equip the unit sphere with the Euclidean metric, the resulting quotient metric on the set of pure states will be the Fubini-Study metric with corresponding distance measure $d^{FS}([\phi],[\psi]) = |\langle \phi, \psi \rangle |$. See \cite{bengtsson} for a more comprehensive description.\\

From Section \ref{BWgeom} we know that the BW metric is obtained in a similar way. If we let $\mathbb{S}_{GL} = \{A \in \mathrm{GL}(n) : \tr(AA^*) = 1 \}$, the unit sphere in $\mathrm{GL}(n)$, then $\dens$ can be identified with $\mathbb{S}_{GL}/\mathrm{U}(n)$ such that $\rho \leftrightarrow [\rho^{1/2}]$. We have shown that when we equip $\mathbb{S}_{GL}$ with the Euclidean metric, the resulting quotient metric on $\dens$ is the BW metric. This shows that the BW metric and angle can be viewed as generalizations of the Fubini-Study metric and distance for mixed states.

\subsubsection*{\textit{Uhlmann and Takatsu}}

In 1992 the German theoretical physicist Armin Uhlmann gave a lecture at the Symposium of mathematical physics in Toruń, Poland titled \textit{"Density operators as an arena for geometry"} \cite{uhlmann}. In this talk he considers density operators as reductions of elements of a larger Hilbert space $\mathcal{H}^{\ext}$ called the purification space. If the reduction of a vector $M \in \mathcal{H}^{\ext}$ is equal to a density operator $\rho$ we call this vector a purification of $\rho$. This modus operandi is often referred to as: \textit{"Going to the church of the larger Hilbert space"} \cite{nilanjana}. $\mathcal{H}^{\ext}$ is given by the space of $n$-dimensional operators with the Hilbert-Schmidt (Euclidean) inner product and the reduction map is given by: $M \mapsto MM^* = \rho$. Note that the fiber of $\rho$ is given by: $\{\rho^{1/2}U: U \in \mathrm{U}(n)\}$. The fact that multiple vectors in $\mathcal{H}^{\ext}$ correspond to the same density operators is referred to as \textit{gauge freedom}. Minimizing the distance between purifications of $\rho_1$ and $\rho_2$ using this freedom gives the BW distance and computing the pushforward of the Hilbert-Schmidt inner product by the reduction map gives the BW metric. \\

As one can see, the approach taken by Uhlmann is similar as the argument in Section \ref{BWgeom}. The purification space takes the role of $\mathrm{GL}(n)$, the reduction map is the quotient map $\pi$ and the gauge freedom is justified by Theorem \ref{theorem1}. In 2008 an argument in a similar language to ours was given independently by Takatsu, this time for the Wasserstein distance between two mean-zero Gaussian distributions \cite{takatsu2008}. Since this distance is identical to the Bures distance, results from both fields can be carried over.

\subsubsection*{\textit{Wigner-Yanase information}}

In Section \ref{cig} it was described that the Fisher metric is the pushforward metric of the Euclidean metric under the square map. That is, the following map is an isometry:
\begin{equation}
  \left(\mathcal{M}_+(\Omega), \Euclh\right) \ni \mu \mapsto \mu^2 \in \left(\mathcal{M}_+(\Omega), \Fish\right).
\end{equation}
If one would replace $\MO$ by $\pherm$, the pushforward of $\Euclh$ will be the Wigner-Yanase metric as described in \cite{gibilisco}. \\

We will now describe what the metric on the left should be so that its pushforward becomes the BW metric. We write $(\cdot)_{\mathcal{H}(M)}: T_M \mathrm{GL}(n) \to \mathcal{H}(\pi, M, \Euclh)$ for the orthogonal projection on the horizontal subspace of $T_M \mathrm{GL}(n)$ where $\pi$ is still as defined in $\eqref{pi}$.
\begin{lemma}
    For $M \in \mathrm{GL}(n)$ and $A \in \mathcal{H}(\pi, M, \Euclh)$, we have:
    \begin{equation}
        \mathcal{L}_{MM^*}(d\pi_M A) = A M^{-1}.
    \end{equation}
\end{lemma}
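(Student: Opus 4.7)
The plan is to use the explicit characterization of the horizontal subspace from Proposition \ref{subspaces}, namely that every $A \in \mathcal{H}(\pi, M, \bar{\mathfrak{g}})$ can be uniquely written in the form $A = HM$ with $H \in \mathbb{H}(n)$. This representation is the whole content of the lemma in disguise, because then $AM^{-1} = H$ is automatically Hermitian, which is the natural candidate for $\mathcal{L}_{MM^*}(d\pi_M A)$.

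First I would compute $d\pi_M(A)$ for such a horizontal $A = HM$. Using $d\pi_M(A) = A M^{*} + M A^{*}$ together with $H^{*} = H$, one obtains
\begin{equation}
d\pi_M(HM) = HM M^{*} + M M^{*} H = H(MM^{*}) + (MM^{*})H.
\end{equation}
This is precisely the Lyapunov equation $\Sigma X + X \Sigma = d\pi_M(A)$ with $\Sigma = MM^{*}$ and unknown $X = H$. Since $\Sigma \in \mathbb{P}(n)$, the Lyapunov equation has a unique Hermitian solution by the fact recalled after Theorem \ref{Uhlmann}, so $\mathcal{L}_{MM^{*}}(d\pi_M A) = H$.

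Finally, combining $H = AM^{-1}$ with the identification above yields $\mathcal{L}_{MM^{*}}(d\pi_M A) = AM^{-1}$, which is the claim. There is no real obstacle here: the only thing one needs to verify carefully is that $AM^{-1}$ is indeed Hermitian, but this is exactly what Proposition \ref{subspaces} guarantees for horizontal vectors, and it is what legitimises invoking the uniqueness of the Lyapunov solution in $\mathbb{H}(n)$.
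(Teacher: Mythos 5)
Your proposal is correct and follows essentially the same route as the paper: both use Proposition \ref{subspaces} to see that $AM^{-1}$ is Hermitian, verify that it solves the Lyapunov equation $\Sigma X + X\Sigma = d\pi_M A$ with $\Sigma = MM^*$, and conclude by uniqueness of the solution. Writing $A = HM$ versus working directly with $AM^{-1}$ is only a notational difference.
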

\begin{proof}
    Since $A \in \mathcal{H}(\pi, M, \Euclh)$ we have that $AM^{-1} \in \herm$. We check:
    \begin{align}
        MM^* \left(AM^{-1}\right) + \left(A M^{-1}\right) M M^* &= MM^* \left(AM^{-1}\right)^* + \left(A M^{-1}\right) M M^*\\
        &= MA^* + A M^*\\
        &= d\pi_M A.
    \end{align} \qed
\end{proof}
We define the following metric on $\pherm$ for $H,K \in T_\Sigma \pherm$:
\begin{equation}
  \mathfrak{g}^\mathcal{H}_\Sigma(H,K) \coloneqq \re \tr \left( H_{\mathcal{H}(\Sigma)} \left(K_{\mathcal{H}(\Sigma)}\right)^* \right).
\end{equation}
Note that the projection $(\cdot)_{\mathcal{H}(\Sigma)}$ happens in the ambient space $T_\Sigma \mathrm{GL}(n)$ and therefore in general $H_{\mathcal{H}(\Sigma)},K_{\mathcal{H}(\Sigma)} \notin T_\Sigma \pherm$.
\begin{theorem}
  The pushforward metric of $\mathfrak{g}^\mathcal{H}$ under the square map is equal to the BW metric $\BWh$. That is, the following map is an isometry:
  \begin{align}
    \pi: \left(\pherm,\mathfrak{g}^\mathcal{H} \right) &\to \left(\pherm, \BWh\right) \\
    \Sigma &\mapsto \Sigma^2.
  \end{align}
\end{theorem}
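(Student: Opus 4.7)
The plan is to identify the square map $\Sigma \mapsto \Sigma^2$ on $\pherm$ as the restriction of $\pi$ from Theorem \ref{BWM} to the subset $\pherm \subset GL(n)$: for Hermitian $\Sigma$, $\pi(\Sigma) = \Sigma \Sigma^* = \Sigma^2$. Under this identification, a tangent vector $H \in T_\Sigma \pherm \cong \herm$ embeds into $T_\Sigma GL(n) \cong \Cnn$, and the differential of the square map sends $H$ to $d\pi_\Sigma(H) = \Sigma H + H \Sigma$. The theorem then reduces to the identity
\begin{equation}
  \mathfrak{g}^\mathcal{H}_\Sigma(H,K) = \BW{d\pi_\Sigma H}{d\pi_\Sigma K}{\Sigma^2}
\end{equation}
for all $H, K \in T_\Sigma \pherm$.

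The main tool I would use is a general consequence of the Riemannian submersion property established in Theorem \ref{BWM}. For any $A, B \in T_M GL(n)$, decompose $A = A_\mathcal{V} + A_\mathcal{H}$ and similarly for $B$; since $d\pi_M$ annihilates the vertical parts, $d\pi_M A = d\pi_M A_\mathcal{H}$, and the isometry condition \eqref{Riemsub} on horizontal vectors yields
\begin{equation}
  \BW{d\pi_M A}{d\pi_M B}{MM^*} = \Eucl{A_\mathcal{H}}{B_\mathcal{H}}{M} = \re \tr\left( A_\mathcal{H} B_\mathcal{H}^* \right).
\end{equation}
Applied at $M = \Sigma \in \pherm$ with $A = H$, $B = K$, the right-hand side is $\re \tr( H_{\mathcal{H}(\Sigma)} (K_{\mathcal{H}(\Sigma)})^* )$, which is exactly $\mathfrak{g}^\mathcal{H}_\Sigma(H,K)$ by definition. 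This gives the desired identity in one line.

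A second route, which is presumably why the preceding lemma was stated, avoids invoking the submersion property abstractly: expand the right-hand side as $\frac{1}{2}\re \tr(\mathcal{L}_{\Sigma^2}(d\pi_\Sigma H)\, d\pi_\Sigma K)$ using \eqref{BWmetric}, replace $\mathcal{L}_{\Sigma^2}(d\pi_\Sigma H)$ by $H_{\mathcal{H}(\Sigma)} \Sigma^{-1}$ via the preceding lemma (valid because $d\pi_\Sigma H = d\pi_\Sigma H_{\mathcal{H}(\Sigma)}$ and $H_{\mathcal{H}(\Sigma)} \in \mathcal{H}(\pi,\Sigma,\Euclh)$), and simplify using $K_{\mathcal{H}(\Sigma)} = K' \Sigma$ with $K' \in \herm$ together with cyclicity of the trace. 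I expect no serious obstacle in either approach; the one subtlety worth flagging is that the projection $(\cdot)_{\mathcal{H}(\Sigma)}$ in the definition of $\mathfrak{g}^\mathcal{H}$ is taken inside the ambient space $T_\Sigma GL(n)$ rather than inside $T_\Sigma \pherm$, so the projected vectors need not themselves be Hermitian --- but this is precisely what makes the match with the submersion's horizontal subspace possible.
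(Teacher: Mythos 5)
Your proposal is correct, and your second route is precisely the paper's proof: expand $\BWh_{\Sigma^2}$ via \eqref{BWmetric}, use $d\pi_\Sigma H = d\pi_\Sigma H_{\mathcal{H}(\Sigma)}$, substitute $\mathcal{L}_{\Sigma^2}(d\pi_\Sigma H_{\mathcal{H}(\Sigma)}) = H_{\mathcal{H}(\Sigma)}\Sigma^{-1}$ from the preceding lemma, and simplify using the form $K_{\mathcal{H}(\Sigma)} = \tilde K \Sigma$ with $\tilde K \in \herm$ and cyclicity of the trace. Your first route is a genuinely different and shorter argument: it invokes the Riemannian submersion property of $\pi$ abstractly, namely that for arbitrary $A,B \in T_M GL(n)$ one has $\BWh_{MM^*}(d\pi_M A, d\pi_M B) = \Euclh_M(A_{\mathcal{H}}, B_{\mathcal{H}})$ because the vertical parts are annihilated and $d\pi_M$ restricted to the horizontal space is a linear isometry; evaluated at $M=\Sigma \in \pherm$ this is the definition of $\mathfrak{g}^\mathcal{H}_\Sigma(H,K)$ and the theorem follows with no computation. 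What the abstract route buys is brevity and the insight that $\mathfrak{g}^\mathcal{H}$ is by construction the unique metric making the square map an isometry onto $(\pherm,\BWh)$; what the paper's explicit route buys is an independent verification through the Lyapunov-operator formula, which also explains why the lemma $\mathcal{L}_{MM^*}(d\pi_M A) = AM^{-1}$ was stated. Your closing remark about the projection $(\cdot)_{\mathcal{H}(\Sigma)}$ living in the ambient space $T_\Sigma GL(n)$ rather than in $T_\Sigma\pherm$ is exactly the subtlety the paper flags as well.
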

\begin{proof}
  We have:
  \begin{align}
      \BWh_{\Sigma^2}(d\pi_\Sigma H, d\pi_\Sigma K) &= \BWh_{\Sigma^2}(d\pi_\Sigma H_{\mathcal{H}(\Sigma)}, d\pi_\Sigma K_{\mathcal{H}(\Sigma)})\\
      &= \frac{1}{2} \re \tr \left( \mathcal{L}_{\Sigma^2}(d\pi_\Sigma H_{\mathcal{H}(\Sigma)}) d\pi_\Sigma K_{\mathcal{H}(\Sigma)} \right)\\
      &= \frac{1}{2} \re \tr\left( H_{\mathcal{H}(\Sigma)} \Sigma^{-1} \left(\Sigma (K_{\mathcal{H}(\Sigma)})^* + K_{\mathcal{H}(\Sigma)} \Sigma \right) \right)\\
      &= \re \tr \left(H_{\mathcal{H}(\Sigma)} (K_{\mathcal{H}(\Sigma)})^*\right)\\
      &= \mathfrak{g}^\mathcal{H}_\Sigma(H,K).
  \end{align} \qed
\end{proof}

\section*{Conclusion}

 In this paper we have highlighted the interplay between Fisher theory and quantum information theory. This has been done by  presenting an existing derivation of the Riemannian metric corresponding to the BW distance on $\pherm$ and subsequently adapting this argument so that the Riemannian distance and geodesics could be  recovered for this metric on the subset $\dens$. In the last part we have compared the geometrical structure to similar structures within quantum information.

\subsubsection*{\textit{Further questions}}
The geometrical structure derived in this paper lives on the space of positive-definite matrices. In quantum information, the larger set of positive semi-definite matrices is studied. It would be interesting to investigate whether the argument to obtain this geometrical structure can be generalised to this set. \\

The Talagrand inequality in its original form \cite{talagrand} gives a bound on the 2-Wasserstein distance between two Gaussian distributions in terms of their relative entropy:
\begin{equation}
  W^2(\mu,\nu) \leq D(\mu||\nu).
\end{equation}
The Wasserstein distance can be written in terms of the covariance matrices of its arguments. The current exposition shows that written in this form, this distance measure appears in quantum information. A related distance measure on the space of covariance matrices is the quantum relative entropy. A further study of the relation between these two distances could result in a distance measure between two (classical) distributions in terms of the quantum relative entropy of their covariance matrices, similar to \cite{georgiou}, and could potentially lead to a Talagrand-type inequality.


\section*{List of symbols} \label{notation}
\begin{align*}
  &\Omega: &&\text{Sample space}\\
  & &&\{\omega_1, ..., \omega_n\} \\    
  &\mathcal{S}(\Omega): &&\text{Space of signed measures}  \\
  & &&\{\mu: \Omega \to \mathbb{R} \} \\
  &\mathcal{M}_+(\Omega): &&\text{Space of strictly positive measures}\\
  & &&\{\mu \in \mathcal{S}(\Omega): \mu(\{\omega_i\}) > 0, \  \forall i \in \{1,...,n\}\} \\
  &\mathcal{P}_+(\Omega) &&\text{Space of strictly positive probability measures}\\
  & &&\{p \in \mathcal{M}_+(\Omega): \sum_i p(\omega_i) = 1\} \\
  &\mathrm{GL}(n): &&\text{General Linear group}\\
  & &&\{M \in \mathbb{C}^{n \times n} : \det(M) \neq 0\} \\
  &\mathrm{U}(n): &&\text{Unitary matrices}  \\
  & &&\{ U \in \mathrm{GL}(n): U^*U = I \} \\
  &\herm: &&\text{Hermitian matrices} \\
  & &&\{H \in \mathbb{C}^{n \times n}: H = H^*\} \\
  &\mathbb{H}^{\perp}(n): &&\text{Skew-Hermitian matrices} \\
  & &&\{K \in \mathbb{C}^{n \times n}: K = -K^*\} \\
  &\pherm: &&\text{Positive-definite Hermitian matrices} \\
  & &&\{\Sigma \in \herm: \phi^* \Sigma \phi > 0, \forall \phi \in \mathbb{C}^n \} \\
  &\dens: &&\text{Trace-one positive-definite Hermitian matrices} \\
  & &&\{\rho \in \pherm : \tr(\rho) = 1\}\\
  &\mathbb{S}_{\mathbb{C}^{n\times n}}:  &&\text{Unit sphere in matrix space} \\
  & &&\{A \in \mathbb{C}^{n\times n} : \tr(AA^*) = 1\} \\
  &\sgl: &&\text{Unit sphere in invertible matrix space} \\
  & &&\mathbb{S}_{\mathbb{C}^{n\times n}} \cap \mathrm{GL}(n)\\
  &\mathcal{L}_\Sigma(H): &&\text{Lyapunov operator} \\
  & &&X \text{ such that } X\Sigma + \Sigma X = H\\
  &d_\pherm^{BW}(\Sigma,T): &&\text{Bures-Wasserstein distance} \\
  & &&\left[\tr(\Sigma) + \tr(T) - 2 \tr\left(\left(\Sigma^{1/2}T\Sigma^{1/2}\right)^{1/2}\right)\right]^{1/2}\\
  & d^{BW}_\dens(\rho_1, \rho_2) && \text{Bures-Wasserstein angle} \\
  & &&\arccos\left(\tr\left(\left(\rho_2^{1/2}\rho_1\rho_2^{1/2}\right)^{1/2}\right)\right)\\
  &\BW{H}{K}{\Sigma}: &&\text{Bures-Wasserstein metric} \\
  & &&\frac{1}{2} \re \tr\left(\mathcal{L}_\Sigma\left(H^{(m)}\right) K^{(m)}\right)\\
  &L(\gamma): &&\text{Length of a curve} \\
  & &&\int_a^b ||\gamma'(t)||_g dt\\
  &d_\mathcal{M}(p,q): &&\text{Riemannian distance on a manifold $\cM$} \\
  & &&\inf \{ L(\gamma) \mid \gamma: [0,1] \to \mathcal{M}, \gamma(0) = p, \gamma(1) = q \}\\
  &\Euclh: && \text{Euclidean metric} \\
  &\bar{d}: &&\text{Euclidean distance} \\
  &d'([p],[q]): && \text{Quotient distance}\\
  & &&\inf_{p \in [p], q \in [q]} d(p,q)  
\end{align*}

\section*{Data Availability}
Data sharing not applicable to this article as no datasets were generated or analysed during the current study.

\bibliographystyle{plain}
\bibliography{refs}

\end{document}